\def\VR{\kern-\arraycolsep\strut\vrule &\kern-\arraycolsep}
\def\vr{\kern-\arraycolsep & \kern-\arraycolsep}
\newtheorem{theorem}{Theorem}
\newtheorem{lemma}[theorem]{Lemma}
\newtheorem{prop}[theorem]{Proposition}
\newtheorem{corollary}[theorem]{Corollary}
\theoremstyle{definition}
\newtheorem{definition}[theorem]{Definition}
\newtheorem{rmk}[theorem]{Remark}
\newenvironment{remark}[1][]{\begin{rmk}[#1]\pushQED{\qed}}{\popQED \end{rmk}}
\newtheorem{qu}[theorem]{Question}
\newtheorem*{rmknonum}{Remark}
\newtheorem{obs}[theorem]{Observation}
\newtheorem{ex}[theorem]{Example}
\newcommand{\End}{\operatorname{End}}
\newcommand{\tr}{\operatorname{Tr}}
\newcommand{\rep}{\operatorname{rep}}
\newcommand{\bl}{\operatorname{BL}}
\newcommand{\BL}{\mathcal{BL}}
\newcommand{\SI}{\operatorname{SI}}
\newcommand{\GL}{\operatorname{GL}}
\newcommand{\ZZ}{\mathbb Z}
\newcommand{\CC}{\mathbb C}
\newcommand{\RR}{\mathbb R}
\newcommand{\NN}{\mathbb N}
\newcommand{\QQ}{\mathbb Q}
\newcommand{\HH}{\mathbb H}
\newcommand{\tup}{\mathbf p}
\newcommand{\0}{\mathbf 0}
\newcommand{\V}{V}
\newcommand{\Id}{\mathbf{I}}
\newcommand{\Mat}{\operatorname{Mat}}
\newcommand{\Stab}{\operatorname{Stab}}
\newcommand{\ddim}{\operatorname{\mathbf{dim}}}
\newcommand{\dd}{\operatorname{\mathbf{d}}}
\newcommand{\G}{\mathcal{G}}
\newcommand{\ar}{\mathcal{A}}
\newcommand{\s}{\mathcal{S}}
\newcommand{\jj}{\mathcal{I}^{-}_j}
\newcommand{\ii}{\mathcal{I}^{+}_i}
\newcommand{\Span}{\mathsf{Span}}
\newcommand{\capa}{\mathbf{D}}
\newcommand{\Det}{\mathsf{Det}}
\newcommand{\rk}{\operatorname{rank}}
\newcommand\restr[2]{{
  \left.\kern-\nulldelimiterspace 
  #1 
  \vphantom{\big|} 
  \right|_{#2} 
  }}
\begin{document}
\title{The capacity of quiver representations and Brascamp-Lieb constants}
\author{Calin Chindris}
\address{University of Missouri-Columbia, Mathematics Department, Columbia, MO, USA}
\email[Calin Chindris]{chindrisc@missouri.edu}

\author{Harm Derksen}
\address{Northeastern University, Boston, MA}
\email[Harm Derksen]{ha.derksen@northeastern.edu}

\date{\today}
\bibliographystyle{amsalpha}
\subjclass[2010]{16G20, 13A50, 14L24}
\keywords{Brascamp-Lieb constants, capacity, completely positive operators, (semi-)stable quiver representations, geometric quiver data}

\begin{abstract} Let $Q$ be a bipartite quiver, $\V$ a real representation of $Q$, and $\sigma$ an integral weight of $Q$ orthogonal to the dimension vector of $V$. Guided by quiver invariant theoretic considerations, we introduce the Brascamp-Lieb operator $T_{\V,\sigma}$ associated to $(\V,\sigma)$ and study its capacity, denoted by $\capa_Q(\V, \sigma)$. When $Q$ is the $m$-subspace quiver, the capacity of quiver data is intimately related to the Brascamp-Lieb constants that occur in the $m$-multilinear Brascamp-Lieb inequality in analysis. 

We show that the positivity of $\capa_Q(\V, \sigma)$ is equivalent to the $\sigma$-semi-stability of $\V$. We also find a character formula for $\capa_Q(\V, \sigma)$ whenever it is positive. Our main tool is a quiver version of a celebrated result of Kempf-Ness on closed orbits in invariant theory. This result leads us to consider certain real algebraic varieties that carry information relevant to our main objects of study. It allows us to express the capacity of quiver data in terms of the character induced by $\sigma$ and sample points of the varieties involved. Furthermore, we use this character formula to prove a factorization of the capacity of quiver data. We also show that the existence of gaussian extremals for $(\V, \sigma)$ is equivalent to $\V$ being $\sigma$-polystable, and that the uniqueness of gaussian extremals implies that $\V$ is $\sigma$-stable. Finally, we explain how to find the gaussian extremals of a gaussian-extremisable datum $(\V, \sigma)$ using the algebraic variety associated to $(\V,\sigma)$.
\end{abstract}

\maketitle
\setcounter{tocdepth}{1}
\tableofcontents

\section{Introduction} 
\subsection{Motivation} The motivation in this paper goes back to the celebrated Brascamp-Lieb (BL) inequality in harmonic analysis. Let $m, d, d_1, \ldots, d_m \geq 1$ be integers and $\mathbf{p}=(p_1, \ldots, p_m)$ an $m$-tuple of non-negative real numbers such that $d=\sum_{j=1}^m p_jd_j$. Let $\mathbf{\V}=(\V_j)_{j=1}^m \in \prod_{j=1}^m \RR^{d_j \times d}$ be an $m$-tuple of matrices. The BL constant associated to the datum $(\mathbf{\V},\mathbf{p})$ is the best constant for which the BL inequality
\begin{equation} \label{classical-BL-ineq}
\int_{\RR^d} \prod_{j=1}^m  \left( f_j \circ \V_j \right)^{p_j}  \leq \bl(\mathbf{\V}, \mathbf{p}) \prod_{j=1}^m \left(  \int_{\RR^{d_j}} f_j\right)^{p_j}
\end{equation}
holds for all non-negative integrable functions $f_j:\RR^{d_j}\to \RR$, $j \in [m]$. The constant $\bl(\mathbf{\V}, \mathbf{p})$ can be infinite, in which case the BL inequality is rather vacuous. However, in the finite case, the Brascamp-Lieb inequality generalizes many classical inequalities in Harmonic Analysis such as the H{\" o}lder, Young's convolution, and Loomis-Whitney inequalities, just to name a few. Furthermore, the BL constants/inequalities permeate various other areas of mathematics, including convex geometry, functional analysis, and computer science. See for example \cite{Ball-1989, Bar-1998, Bar-et-al-2011, BenCarChrTao-2008, Ben-Bez-Flo-Lee-2018, BenCarTao-2006, CheDafPao-2015, GarGurOliWig-2017, Dvir-Hi-2016, DvirGarOliSol-2018}. 

A systematic study of BL constants has been undertaken by J. Bennett, A. Carbery, M. Christ, and T. Tao in \cite{BenCarChrTao-2008} where the authors prove many important results. For example, they give necessary and sufficient conditions for the finiteness of $\bl(\mathbf{\V},\mathbf{p})$. The linear maps $\V_j$, $j \in [m]$, that appear in $(\ref{classical-BL-ineq})$ can be encoded as a representation of the bipartite directed graph $\mathcal{Q}_m$, also known as the \emph{$m$-subspace quiver}, as follows:
$$\mathcal{Q}_m:~
\vcenter{\hbox{  
\begin{tikzpicture}[point/.style={shape=circle, fill=black, scale=.3pt,outer sep=3pt},>=latex]
   \node[point,label={left:$v_1$}] (1) at (-2,0) {};
   \node[point,label={right:$w_1$}] (2) at (0,1.5) {};
   \node[point,label={right:$w_2$}] (3) at (0,1) {};
   \node[point,label={right:$w_{m-1}$}] (4) at (0,-1) {};
   \node[point,label={right:$w_m$}] (5) at (0,-1.5) {};
  
   \draw[dotted] (0,.1)--(0,-.1);
  
   \path[->]
   (1) edge [bend left=15] node[midway, above] {$a_1$} (2)
   (1) edge [bend left=20] node[midway, below] {$a_2$} (3)
   (1) edge [bend right=20]  node[midway, above] {$a_{m-1}$} (4)
   (1) edge [bend right=15] node[midway, below] {$a_m$} (5);
\end{tikzpicture} 
}}
\hspace{30pt}
\mathbf{\V}:~
\vcenter{\hbox{
\begin{tikzpicture}[point/.style={shape=circle, fill=black, scale=.3pt,outer sep=3pt},>=latex]
   \node[point,label={left:$\RR^d$}] (1) at (-2,0) {};
   \node[point,label={right:$\RR^{d_1}$}] (2) at (0,1.5) {};
   \node[point,label={right:$\RR^{d_2}$}] (3) at (0,1) {};
   \node[point,label={right:$\RR^{d_{m-1}}$}] (4) at (0,-1) {};
   \node[point,label={right:$\RR^{d_m}$}] (5) at (0,-1.5) {};
  
   \draw[dotted] (0,.1)--(0,-.1);
  
   \path[->]
   (1) edge [bend left=15] node[midway, above] {$\V_1$} (2)
   (1) edge [bend left=20] node[midway, below] {$\V_2$} (3)
   (1) edge [bend right=20]  node[midway, above] {$\V_{m-1}$} (4)
   (1) edge [bend right=15] node[midway, below] {$\V_m$} (5);
\end{tikzpicture} 
}}
$$

\noindent
Then \cite[Theorem 1.13]{BenCarChrTao-2008} simply says that $\bl(\mathbf{\V},\mathbf{p})< \infty$ if and only if $\mathbf{\V}$ is a semi-stable representation of $\mathcal{Q}_m$ with respect to the weight defined by $\mathbf{p}$. Furthermore, the following comment appears in \cite[Section 4]{BenCarChrTao-2008}: ``\emph{It is likely that the deeper theory of such [quiver] representations is of relevance to this [Brascamp-Lieb] theory, but we do not pursue these connections here}.'' 

In this paper, we study BL constants within the general framework of quiver invariant theory.

\subsection{Our results} We briefly recall just enough terminology to state our main results, with more detailed background found in Section \ref{BL-operators-sec}. Let $Q$ be a connected quiver with set of vertices $Q_0$ and set of arrows $Q_1$. For an arrow $a \in Q_1$, we denote by $ta$ and $ha$, its tail and head, respectively. We represent $Q$ as a directed graph with set of vertices $Q_0$ and directed edges $a:ta \to ha$ for every $a \in Q_1$. A real representation $\V$ of $Q$ assigns a finite-dimensional real vector space $V(x)$ to every vertex $x \in Q_0$ and a linear map $\V(a): \V(ta) \to \V(ha)$ to every arrow $a \in Q_1$. After fixing bases for the vector spaces $\V(x)$, $x \in Q_0$, we often think of the linear maps $\V(a)$, $a \in Q_1$, as matrices of appropriate size. The dimension vector of a representation $\V$ of $Q$ is $\ddim \V:=(\dim_{\RR} \V(x))_{x \in Q_0} \in \NN^{Q_0}$.

Let $\sigma \in \ZZ^{Q_0}$ be an integral weight of $Q$. A representation $\V$ of $Q$ is said to be \emph{$\sigma$-semi-stable} if $\sigma \cdot \ddim \V=0$ and $\sigma \cdot \ddim \V'\leq 0$ for all subrepresentations $\V' \leq \V$. We say that $\V$ is \emph{$\sigma$-stable} if $\sigma \cdot \ddim \V=0$ and $\sigma \cdot \ddim \V'< 0$ for all proper subrepresentations $\V'$ of $\V$. We call a representation \emph{$\sigma$-polystable} if it is a finite direct sum of $\sigma$-stable representations. 

For our purposes, we can simply assume that $Q$ is bipartite (see Remark \ref{general-bipartite-case-rmk}). This means that $Q_0$ is the disjoint union of two subsets $Q^+_0=\{v_1, \ldots, v_n\}$ and $Q^-_0=\{w_1, \ldots, w_m\}$, and all arrows in $Q$ go from $Q^+_0$ to $Q^-_0$. Furthermore, we assume that $\sigma$ is positive on $Q_0^+$, and negative on $Q_0^-$. 

Let $\dd \in \NN^{Q_0}$ be a dimension vector such that $\sigma \cdot \dd=0$, and let $\V$ be a $\dd$-dimensional representation of $Q$ with $\V(x)=\RR^{\dd(x)}, \forall x\in Q_0$, and $\V(a) \in \RR^{\dd(ha) \times \dd(ta)}, \forall a \in Q_1$. Guided by invariant theoretic considerations and \cite[Construction 4.2]{GarGurOliWig-2017}, we associate to the quiver datum $(\V, \sigma)$, the so-called \emph{BL operator} $T_{\V,\sigma}$ (see Definition \ref{BL-operator-def}). This is a completely positive operator whose Kraus operators are certain blow-ups of the matrices $\V(a), a \in Q_1$. 

We define the \emph{capacity} of $(\V,\sigma)$, denoted by $\capa_Q(\V,\sigma)$, to be the capacity of the operator $T_{\V,\sigma}$. The determinantal formula for $\capa_Q(\V,\tup)$ in Lemma \ref{cap-compute-lemma} leads us to the definition of the \emph{BL constant} $\bl_Q(\V,\tup)$ associated to $(\V, \tup)$ (see Definition \ref{BL-defn-quivers}). In fact, when $Q=\mathcal{Q}_m$ is the $m$-subspace quiver, we recover the classical BL constants.

Our first result gives necessary and sufficient conditions for the positivity of the capacity of a quiver datum. 

\begin{theorem}\label{cap-semi-stab-thm} 
Let $Q$ be a bipartite quiver and $(\V,\sigma)$ a quiver datum. Then
$$
\capa_Q(\V,\sigma)>0 \Longleftrightarrow \V \text{~is~} \sigma\text{~-semi-stable}.
$$ 
\end{theorem}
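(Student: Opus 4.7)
The plan is to prove the two implications separately, leveraging the determinantal infimum expression for $\capa_Q(\V,\sigma)$ supplied by Lemma \ref{cap-compute-lemma} together with standard GIT techniques for quiver representations.

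For the forward direction I would argue the contrapositive in the spirit of the Hilbert-Mumford criterion. Positivity of capacity already forces $\sigma \cdot \ddim \V = 0$, since otherwise uniform rescaling $X(x) \mapsto \lambda X(x)$ makes the expression under the infimum homogeneous of nonzero degree in $\lambda$. So assume $\sigma \cdot \ddim \V = 0$ and that there is a proper subrepresentation $\V' \leq \V$ with $\sigma \cdot \ddim \V' > 0$. Picking bases compatible with decompositions $\V(x) = \V'(x) \oplus \V''(x)$, take the block-diagonal test tuple
\[
X_t(x) = t^{a(x)} I_{\V'(x)} \oplus t^{b(x)} I_{\V''(x)},
\]
with exponents $a(x), b(x)$ chosen so that the $\sigma$-weighted normalization built into $\capa_Q$ is preserved for all $t$. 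Because $\V'$ is a subrepresentation, each $\V(a)$ is block upper-triangular with respect to these decompositions, so Lemma \ref{cap-compute-lemma} evaluated on $X_t$ factors into contributions from the $\V'$ and $\V''$ blocks, and the total $t$-exponent collapses to a strictly negative multiple of $\sigma \cdot \ddim \V'$. Thus the value of the expression tends to $0$ as $t \to \infty$, forcing $\capa_Q(\V,\sigma) = 0$.

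For the reverse direction I would invoke King's criterion: $\V$ is $\sigma$-semi-stable if and only if there exist an integer $n \geq 1$ and a semi-invariant $f \in \SI(Q,\ddim \V)_{n\sigma}$ with $f(\V) \neq 0$. Combined with the Derksen-Weyman realization of $\SI(Q,\ddim \V)_{n\sigma}$ as determinants of matrix pencils, this produces explicit polynomial witnesses which, when substituted into the determinantal formula of Lemma \ref{cap-compute-lemma}, should yield a uniform lower bound of the form $\capa_Q(\V,\sigma) \geq c_n\, |f(\V)|^{2/n}$ for some constant $c_n > 0$ coming from a Hadamard-type inequality on block-matrix determinants.

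The main obstacle is precisely this reverse direction: one must bridge the algebraic/GIT condition of semi-stability with an analytic positivity statement for an infimum over the non-compact cone of positive-definite tuples. A slicker route, suggested by the abstract's emphasis on Kempf-Ness, is to produce in the $G_{\dd}$-orbit closure of $\V$ a $\sigma$-polystable representative at which the infimum is actually attained, automatically forcing it to be positive and bypassing direct Hadamard estimates altogether.
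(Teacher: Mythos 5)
Your proposal does not establish the reverse implication, and you say so yourself: the claim that King's criterion plus Derksen--Makam determinantal semi-invariants ``should yield'' a bound $\capa_Q(\V,\sigma)\geq c_n|f(\V)|^{2/n}$ is exactly the nontrivial analytic content of the theorem, and nothing in the proposal supplies it. The alternative ``slicker route'' via a polystable representative in the orbit closure is in fact viable and non-circular (it is essentially Theorem \ref{quiver-geom-data-thm}(ii): capacity is continuous and $\GL(\dd)_{\sigma}$-invariant, a polystable degeneration $\widetilde{\V}$ exists, and $\capa_Q(\widetilde{\V},\sigma)=\chi_{\sigma}(A)^2>0$ for $A\in\G_{\sigma}(\widetilde{\V})$), but you do not execute it, and it rests on the full Kempf--Ness machinery of Proposition \ref{main-prop-KN}, the continuity of capacity, and the fact that doubly stochastic operators have capacity one --- none of which is argued. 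The forward (contrapositive) direction is the standard degeneration argument and is salvageable, but as written it has a real imprecision: the test objects in Lemma \ref{cap-compute-lemma} live only at the sink vertices $w_j$, and with $\V(a)$ block upper-triangular the matrix $\sum_j\sigma_-(w_j)\sum_a\V(a)^T Y_j\V(a)$ does \emph{not} factor into $\V'$- and $\V''$-contributions because of the off-diagonal blocks $X(a)^T Y_j X(a)$; one must invoke Fischer's inequality $\det(M)\leq\det(M_{11})\det(M_{22})$ for positive semidefinite $M$ and then check that the cross terms do not raise the $t$-exponent. So the proposal is an outline with one fixable half and one genuinely open half.

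For comparison, the paper's proof is shorter and entirely different in mechanism: it quotes the rank-nondecreasing criterion of Garg--Gurvits--Oliveira--Wigderson (Lemma \ref{cap-pos-general-lemma}), namely $\capa(T)>0$ if and only if $\rk(X)\leq\rk(T^*(X))$ for all $X\succeq 0$, computes $T^*_{\V,\sigma}(X)$ explicitly as a block-diagonal matrix, and then shows by elementary linear algebra (rank-one decompositions of positive semidefinite matrices in one direction, spans of the supporting vectors in the other) that the rank condition is equivalent to the subspace inequality (\ref{semi-stab-ineq}), i.e.\ to $\sigma$-semi-stability. This handles both implications simultaneously, requires no degeneration argument, no semi-invariants, and no Kempf--Ness theory, and is what makes the theorem available before the orbit-closure results of Section \ref{geo-quiver-data-sec} are proved. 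If you want to complete your route, the cleanest fix is to adopt that rank criterion; otherwise you must fully carry out either the semi-invariant lower bound or the polystable-degeneration argument for the reverse direction.
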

\noindent

In \cite[Corollary 3.17]{GarGurOliWig-2017}, the authors have found a deterministic polynomial time algorithm for deciding the positivity of the capacity of a completely positive operator. This algorithm combined with Theorem \ref{cap-semi-stab-thm} yields a $poly(b,N)$ time (deterministic) algorithm to check if $\V$ is $\sigma$-semi-stable where $b$ is the total bit size of $\V$ and $N=\sum_{i=1}^n \sigma(v_i) \dd(v_i)$. The importance of the existence of such an algorithm stems from the fact that, in general, quiver semi-stability requires one check a number of linear homogeneous inequalities that can grow exponentially. 

In Lemma \ref{cap-compute-lemma}, we show that $\capa_Q(\V,\sigma)$ is the infimum of certain determinantal expressions where the infimum is taken over all positive definite matrices $Y_j \in \RR^{\dd(w_j) \times \dd(w_j)}$, $j \in [m]$. We say that a quiver datum $(\V,\sigma)$ is \emph{gaussian-extremisable} if the infimum defining $\capa_Q(\V,\sigma)$ is attained for some positive definite matrices $Y_j \in \RR^{\dd(w_j)\times \dd(w_j)}$, $j \in [m]$. If this is the case, we call such an $m$-tuple $(Y_1, \ldots, Y_m)$ a \emph{gaussian extremiser} for $(\V,\sigma)$.

One of our main goals in this paper is to find a constructive method for computing $\capa_Q(\V,\sigma)$, and gaussian extremisers whenever $(\V,\sigma)$ is gaussian-extremisable. To this end, we introduce the notion of a \emph{geometric quiver datum}: We say that $(\V,\sigma)$ is \emph{geometric} if the corresponding operator $T_{\V,\sigma}$ is doubly-stochastic (see Definition \ref{quiver-geom-def}). Consequently, the capacity of such a datum is always one (see \cite[Proposition 2.8 and Corollary 3.4]{GarGurOliWig-2015}). Geometric quiver data are also intimately related to the Kempf-Ness theorem on closed orbits in invariant theory.

Our next result gives a quiver invariant process that transforms an arbitrary quiver datum $(\V,\sigma)$ with $\capa_Q(\V,\sigma)>0$ into a geometric one. In particular it leads to a character formula for $\capa_Q(\V,\sigma)$. To state this result, we need to introduce a few more concepts. The \emph{representation space} of $\dd$-dimensional representations of $Q$ is the affine space $\rep(Q,\dd)=\prod_{a \in Q_1} \RR^{\dd(ha)\times \dd(ta)}$. It is acted upon by the change of base group $\GL(\dd)=\prod_{x \in Q_0}\GL(\dd(x), \RR)$ by simultaneous conjugation. The \emph{character of $\GL(\dd)$ induced by $\sigma$} is $\chi_{\sigma}:\GL(\dd) \to \RR^{\times}=\RR \setminus \{0\}$, $\chi_{\sigma}(A)=\prod_{x \in Q_0}\det(A(x))^{\sigma(x)}$ for all $A=(A(x))_{x \in Q_0} \in \GL(\dd)$. We denote by $\GL(\dd)_{\sigma}$ the kernel of $\chi_{\sigma}$.

\begin{theorem} \label{main-thm-2} Let $Q$ be a bipartite quiver, $\dd \in \NN^{Q_0}$ a dimension vector of $Q$, and $\sigma \in \ZZ^{Q_0}$ an integral weight of $Q$ orthogonal to $\dd$. Assume that $\sigma$ is positive on $Q_0^{+}$ and negative on $Q_0^{-}$. 

\begin{enumerate}
\item (\textbf{Kempf-Ness theorem for real quiver representations}) For a $\sigma$-semi-stable representation $\V \in \rep(Q,\dd)$, consider the real algebraic variety
$$
\G_{\sigma}(\V):=\{A \in \GL(\dd) \mid (A\cdot \V, \sigma)\text{~is a geometric datum}\}.
$$ 
Then
$$
\G_{\sigma}(\V) \neq \emptyset \Longleftrightarrow \V \text{~is~}\sigma-\text{polystable}.
$$

\bigskip

\item (\textbf{A character formula for capacity}) Let $\V \in \rep(Q,\dd)$ be a $\sigma$-semi-stable representation. Then there exists a $\sigma$-polystable representation $\widetilde{\V}$ such that $\widetilde{\V} \in \overline{\GL(\dd)_{\sigma}\V}$. Furthermore, for any such $\widetilde{\V}$, the following formula holds
$$
\capa_Q(\V,\sigma)=\capa_Q(\widetilde{\V},\sigma)=\chi_{\sigma}(A)^2, \forall A \in \G_{\sigma}(\widetilde{\V}).
$$ 	

\bigskip

\item (\textbf{Factorization of capacity}) Let $\V \in \rep(Q,\dd)$ be a representation such that 
$$
\V(a)=\left(
\begin{matrix}
\V_1(a) & X(a)\\
0&\V_2(a)
\end{matrix}\right),
\forall a \in Q_1,
$$
where $\V_i \in \rep(Q,\dd_i)$, $i \in \{1,2\}$, are representations of $Q$, and $X(a) \in \RR^{\dd_1(ha)\times \dd_2(ta)}, \forall a \in Q_1$. If $\sigma \cdot  \ddim \V_1=0$ then
$$
\capa_Q(\V,\sigma)=\capa_Q(\V_1,\sigma)\cdot \capa_Q(\V_2,\sigma).
$$

\bigskip		
		
\item (\textbf{Gaussian extremisers: existence}) A quiver datum $(\V, \sigma)$ with $\capa_Q(\V, \sigma)>0$ is gaussian-extremisable if and only if $\V$ is $\sigma$-polystable. If this is the case then the gaussian extremisers of $(\V, \sigma)$ are the $m$-tuples of matrices 
$$
(A(w_j)^T \cdot A(w_j))_{j \in [m]} \text{~with~} A \in \G_{\sigma}(\V).$$ 

\bigskip

\item (\textbf{Gaussian extremisers: uniqueness}) If a quiver datum $(\V, \sigma)$ with  $\capa_Q(\V, \sigma)>0$ has unique gaussian extremisers (up to scaling) then $\V$ is $\sigma$-stable. Conversely if $\V$ is $\sigma$-stable with a one-dimensional space of endomorphisms then $(\V, \sigma)$ has unique gaussian extremisers (up to scaling).
\end{enumerate}
\end{theorem}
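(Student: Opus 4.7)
My overall plan is to build a dictionary between the analytic quantity $\capa_Q(\V,\sigma)$ and the geometric invariant theory of the $\GL(\dd)_\sigma$-action on $\rep(Q,\dd)$; all five parts will then fall out of this correspondence. The central calculation I would do first is to extract from the determinantal formula in Lemma \ref{cap-compute-lemma} the \emph{scaling rule}
\[
\capa_Q(A\cdot \V,\sigma) \;=\; \chi_\sigma(A)^{-2}\,\capa_Q(\V,\sigma), \qquad A \in \GL(\dd),
\]
and to check that $A \in \GL(\dd)_\sigma$ is a critical point of $B \mapsto \capa_Q(B\cdot \V,\sigma)$ precisely when $(A\cdot \V,\sigma)$ is a geometric datum (the first-order optimality conditions in $Y_j$ coincide with the doubly-stochastic condition on $T_{A\cdot \V,\sigma}$). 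In particular, capacity is constant on $\GL(\dd)_\sigma$-orbits.

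For part (1), I would establish the real Kempf-Ness statement by a two-step argument. For $(\Leftarrow)$: assuming $\V$ is $\sigma$-polystable, I would invoke a real analog of King's semi-invariant criterion to conclude that $\GL(\dd)_\sigma\cdot \V$ is closed in $\rep(Q,\dd)$, and then show that $A \mapsto -\log\capa_Q(A\cdot \V,\sigma)$ is proper on this orbit modulo its stabilizer and convex along one-parameter subgroups of $\GL(\dd)_\sigma$ (adapting the classical Kempf-Ness convexity via log-concavity of determinants). This yields a minimiser $A_\ast$, and the critical-point equation at $A_\ast$ is precisely the doubly-stochastic condition, so $A_\ast \in \G_\sigma(\V)$. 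For $(\Rightarrow)$: given $A \in \G_\sigma(\V)$, the same convexity makes $A\cdot \V$ a global minimiser on its orbit, forcing the orbit to be closed and hence $\V$ to be $\sigma$-polystable. Part (2) then follows: for $\sigma$-semi-stable $\V$, the closure $\overline{\GL(\dd)_\sigma \cdot \V}$ contains a unique closed orbit whose representatives $\widetilde\V$ are $\sigma$-polystable by (1); the equality $\capa_Q(\V,\sigma)=\capa_Q(\widetilde\V,\sigma)$ follows from $\GL(\dd)_\sigma$-invariance plus continuity at the limit (with Theorem \ref{cap-semi-stab-thm} ruling out degeneration to zero capacity); and $\capa_Q(\widetilde\V,\sigma) = \chi_\sigma(A)^{2}$ drops out by applying the scaling rule to the geometric datum $(A\cdot \widetilde\V,\sigma)$ of capacity one.

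For part (3), the hypothesis $\sigma\cdot\ddim \V_1 = 0$ guarantees that the one-parameter subgroup $\lambda(t)(x) = \mathrm{diag}(t\, I_{\dd_1(x)},\, I_{\dd_2(x)})$ lies in $\GL(\dd)_\sigma$, and a direct block computation shows $\lambda(t)\cdot \V \to \V_1\oplus \V_2$ as $t\to 0$. Combining with part (2) gives $\capa_Q(\V,\sigma) = \capa_Q(\V_1\oplus \V_2,\sigma)$, and the capacity of a direct sum factorises because the determinantal infimum restricted to block-diagonal $Y_j$ splits as a product, while a convexity argument reduces the full infimum to block-diagonal ones. For parts (4) and (5), the determinantal formula identifies gaussian extremisers of $(\V,\sigma)$ with minimisers of $A\mapsto \capa_Q(A\cdot \V,\sigma)$ on $\GL(\dd)_\sigma$ via the substitution $Y_j = A(w_j)^TA(w_j)$; by (1), this infimum is attained iff $\V$ is $\sigma$-polystable, and the minimisers correspond bijectively to $\G_\sigma(\V)$. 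For uniqueness, if $\V$ is $\sigma$-polystable but not $\sigma$-stable, I would decompose $\V = \bigoplus_i \V_i^{n_i}$ into pairwise non-isomorphic $\sigma$-stable summands and independently rescale each isotypic component by scalars with trivial total $\chi_\sigma$-character, producing a multi-parameter family of inequivalent extremisers and contradicting uniqueness. Conversely, for $\sigma$-stable $\V$ with $\End(\V)=\RR$, the stabilizer of $\V$ in $\GL(\dd)_\sigma$ reduces to scalars, so $\G_\sigma(\V)$ projects to a single point modulo scaling.

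The main obstacle I anticipate is the real Kempf-Ness statement in part (1): both the properness/coercivity of the log-capacity function on the orbit and the characterisation of $\sigma$-polystability by closedness of the $\GL(\dd)_\sigma$-orbit over $\RR$ are genuinely subtler than in the complex setting where King's theorem is classical, and some care is needed in how closed orbits and their real points interact. Once these foundational facts are in hand, the remaining parts should reduce to the scaling rule, the explicit degeneration used in part (3), and routine block-decomposition calculations.
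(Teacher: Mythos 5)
Your overall architecture (scaling rule, $\GL(\dd)_{\sigma}$-invariance of capacity, degeneration to a polystable representative, identification of gaussian extremisers with $\G_{\sigma}(\V)$ via $Y_j=A(w_j)^TA(w_j)$ and first-order optimality, and the rescaling of summands to break uniqueness in part (5)) matches the paper closely, and parts (2), (4), (5) are essentially the arguments given there. However, there are two genuine gaps. First, in part (1) your proposed Kempf--Ness functional is incoherent: by your own scaling rule $\capa_Q(A\cdot\V,\sigma)=\chi_{\sigma}(A)^{-2}\capa_Q(\V,\sigma)$, the function $A\mapsto-\log\capa_Q(A\cdot\V,\sigma)$ is \emph{constant} on $\GL(\dd)_{\sigma}$, so it is neither proper nor does it single out geometric data as critical points. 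The correct functional is the classical Kempf--Ness norm square of $A\cdot(\V,1)$ (equivalently, the determinantal expression $\capa_Q(\V,\sigma;Y)$ evaluated at the specific $Y_j=A(w_j)^TA(w_j)$, which is not the same as the infimum $\capa_Q(A\cdot\V,\sigma)$). The paper sidesteps your route entirely: it proves Proposition \ref{main-prop-KN}, transferring closedness of orbits between $\RR$ and $\CC$ via Birkes and Borel--Harish-Chandra, using King's theorem over $\CC$ and Hoskins--Schaffhauser for base change of polystability, and then invokes the real Kempf--Ness theory of minimal vectors (Wallach, B\"ohm--Lafuente) to get the matrix equations $(\ref{minimal-eqn-1-lemma})$--$(\ref{minimal-eqn-2-lemma})$, which become the geometric-datum equations after the rescaling $W(a)=\sqrt{\sigma_+(ta)\sigma_-(ha)}\,\V(a)$. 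You correctly flag the real polystability-equals-closed-orbit statement as the main obstacle, but flagging it does not fill it, and the wrong choice of functional means the variational half of your argument does not run as written.

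Second, in part (3) your claim that ``a convexity argument reduces the full infimum to block-diagonal $Y_j$'' for a direct sum is not routine and is left unsubstantiated. The naive replacement of $Y_j$ by its block-diagonal part fails: Fischer's inequality increases both the numerator $\prod_i\det(M_i)^{\sigma_+(v_i)}$ and the denominator $\prod_j\det(Y_j)^{\sigma_-(w_j)}$, so the ratio is not monotone under this operation, and only the easy inequality $\capa_Q(\V_1\oplus\V_2,\sigma)\le\capa_Q(\V_1,\sigma)\cdot\capa_Q(\V_2,\sigma)$ comes for free. The paper avoids this issue entirely: after degenerating $\V$ to $\V_1\oplus\V_2$ along your one-parameter subgroup (this step you have right), it handles the zero-capacity case by Theorem \ref{cap-semi-stab-thm} together with closure of the semistable category under extensions, and in the positive case it deduces multiplicativity from the character formula of part (2), since $\chi_{\sigma}(A_1\oplus A_2)^2=\chi_{\sigma}(A_1)^2\chi_{\sigma}(A_2)^2$ for $A_i\in\G_{\sigma}(\V_i')$. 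You should either adopt that route or supply an actual proof of the block-diagonal reduction.
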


In Theorem \ref{sumarry-BL-consts-thm}, we reformulate the results above in terms of BL constants for bipartite quivers. When $Q$ is the $m$-subspace quiver $\mathcal{Q}_m$, we recover the main results of Bennet-Carbery-Christ-Tao \cite{BenCarChrTao-2008} on the classical BL constants with rational $m$-exponents.

On the computational side, the character formula above opens up the possibility of computing capacities (BL-constants) and gaussian extremisers for quiver data via algebraic sampling algorithms (see for example \cite{BasPolRoy-2006}).

\subsection*{Acknowledgment} The first author would like to thank Harm Derksen and University of Michigan for their kind hospitality during his one week visit in July 2018. C. Chindris is supported by Simons Foundation grant $\# 711639$. H. Derksen is supported by NSF grant DMS-1601229. The authors would also like to thank Edward Duran, Dan Edidin, Cole Franks, Visu Makam, Peter Pivovarov, and Petros Valettas for many helpful conversations on the subject of the paper. 

\section{Brascamp-Lieb operators and the capacity of quiver representations} \label{BL-operators-sec}
Throughout, we work over the field $\RR$ of real numbers and denote by $\NN=\{0,1,\dots \}$. For a positive integer $L$, we denote by $[L]=\{1, \ldots, L\}$.

A quiver $Q=(Q_0,Q_1,t,h)$ consists of two finite sets $Q_0$ (\emph{vertices}) and $Q_1$ (\emph{arrows}) together with two maps $t:Q_1 \to Q_0$ (\emph{tail}) and $h:Q_1 \to Q_0$ (\emph{head}). We represent $Q$ as a directed graph with set of vertices $Q_0$ and directed edges $a:ta \to ha$ for every $a \in Q_1$. Throughout we assume that our quivers are connected, meaning that the underlying graph of $Q$ is connected.

A representation of $Q$ is a family $\V=(\V(x), \V(a))_{x \in Q_0, a\in Q_1}$ where $\V(x)$ is a finite-dimensional $\RR$-vector space for every $x \in Q_0$, and $\V(a): \V(ta) \to \V(ha)$ is a $\RR$-linear map for every $a \in Q_1$. A \emph{subrepresentation} $\V'$ of $\V$, written as $V' \leq \V$, is a representation of $Q$ such that $\V'(x) \leq_{\RR} \V(x)$ for every $x \in Q_0$, and $\V(a)(\V'(ta)) \subseteq \V'(ha)$ and $\V'(a)$ is the restriction of $\V(a)$ to $\V(ta)$ for every arrow $a \in Q_1$.

The dimension vector $\ddim \V \in \NN^{Q_0}$ of a representation $\V$  is defined by $\ddim \V(x)=\dim_{\RR} \V(x)$ for all $x \in Q_0$. By a dimension vector of $Q$, we simply mean a $\ZZ_{\geq 0}$-valued function on the set of vertices $Q_0$. For two vectors $\theta, \beta \in \RR^{Q_0}$, we define $\theta \cdot \beta=\sum_{x \in Q_0} \theta(x)\beta(x)$. 

Let $\dd \in \NN^{Q_0}$ be a dimension vector. The representation space of $\dd$-dimensional representations of $Q$ is the affine space
$$
\rep(Q,\dd)=\prod_{a \in Q_1}\RR^{\dd(ha)\times \dd(ta)}.
$$
The change-of-base group $\GL(\dd)=\prod_{x \in Q_0}\GL(\dd(x), \RR)$ acts on $\rep(Q,\dd)$ by simultaneous conjugation, i.e. for $A=(A(x))_{x \in Q_0}$ and $\V=(\V(a))_{a \in Q_1}$, we have that
$$(A\cdot \V)(a)=A(ha)\cdot \V(a) \cdot A(ta)^{-1}, \forall a \in Q_1.
$$
Note that there is a bijective correspondence between the isomorphism classes of representations of $Q$ of dimension vector $\dd$ and the $\GL(\dd)$-orbits in $\rep(Q,\dd)$.

From now on, we assume that $Q$ is bipartite. This means that $Q_0$ is the disjoint union of two subsets $Q^+_0$ and $Q^-_0$, and all the arrows in $Q$ go from $Q^+_0$ to $Q^-_0$. Write $Q_0^{+}=\{v_1, \ldots, v_n\}$ and $Q_0^{-}=\{w_1,\ldots, w_m\}$. 

Let us fix an integral weight $\sigma \in \ZZ^{Q_0}$ such that $\sigma$ is positive on $Q^+_0$ and negative on $Q^-_0$. Define
$$\sigma_{+}(v_i)=\sigma(v_i), \forall i \in [n], \text{~and~}
\sigma_{-}(w_j)=-\sigma(w_j), \forall j \in [m].
$$
Let $\HH(\dd)=\{\sigma \in \RR^{Q_0} \mid \sum_{x \in Q_0}\sigma(x)\dd(x)=0\}$
be the space of real weights of $Q$ orthogonal to $\dd$, and let us assume that $\sigma \in \HH(\dd)$. This is equivalent to
$$
N:=\sum_{i=1}^n \sigma_{+}(v_i)\dd(v_i)=\sum_{j=1}^m \sigma_{-}(w_j)\dd(w_j).
$$

For $i\in [n]$ and $j \in [m]$, we denote the set of all arrows in $Q$ from $v_i$ to $w_j$  by $\ar_{i,j}$. If there are no arrows from $v_i$ to $w_j$, we define $\ar_{i,j}$ to be the set consisting of the symbol $\0_{ij}$.

Let $M:=\sum_{j=1}^m \sigma_{-}(w_j) \text{~and~} M':=\sum_{i=1}^n \sigma_{+}(v_i).$
For each $j \in [m]$ and $i \in [n]$, define 

$$
\jj:=\{q \in \ZZ \mid \sum_{k=1}^{j-1} \sigma_{-}(w_k) < q \leq  \sum_{k=1}^{j} \sigma_{-}(w_k) \},
$$
and 
$$
\ii:=\{r \in \ZZ \mid \sum_{k=1}^{i-1} \sigma_{+}(v_k) < r \leq  \sum_{k=1}^{i} \sigma_{+}(v_k) \}.
$$
In what follows, we consider $M \times M'$ block matrices of size $N \times N$ such that for any two indices $q \in \jj$ and $r \in \ii$, the $(q,r)$-block-entry is a matrix of size $\dd(w_j)\times \dd(v_i)$. Set
$$
\s:= \{(i,j,a,q,r) \mid i \in [n], j \in [m],\\ a \in \ar_{i,j}, \\ q \in \jj, r \in \ii \}.
$$

\smallskip
Now, let $\V\in \rep(Q,\dd)$ be a $\dd$-dimensional representation of $Q$. For each $(i,j,a,q,r) \in \s$, let $\V^{i,j,a}_{q,r}$ be the $M \times M'$ block matrix whose $(q,r)$-block-entry is $\V(a) \in \RR^{\dd(w_j)\times \dd(v_i)}$, and all other entries are zero. The convention is that if $a=\0_{ij} \in \ar_{i,j}$ then $\V(a)$ is the zero matrix of size $\dd(w_j)\times \dd(v_i)$; hence, if there are no arrows from $v_i$ to $w_j$ then $\V^{i,j,a}_{q,r}$ is the zero matrix of size $N \times N$. 

\begin{rmk}
The $N \times N$ matrices $\V^{i,j,a}_{q,r}$, where $(i,j,a,q,r) \in \s$ and $\V \in \rep(Q,\dd)$, play a key role in the theory of semi-invariants of acyclic quivers. Specifically, let $t^{i,j,a}_{q,r}$, $(i,j,a,q,r) \in \s$, be indeterminate variables. Then, assuming that $K=\CC$, the coefficients of the polynomial
$$
\det\left( \sum_{(i,j,a,q,r)} t^{i,j,a}_{q,r} \V^{i,j,a}_{q,r} \right) \in K[\rep(Q,\dd)][t^{i,j,a}_{q,r}: (i,j,a,q,r) \in \s]
$$
span the weight space of semi-invariants $\SI(Q,\dd)_{\sigma}$. For more details, see \cite[Section 5]{HarmVisu-2017} and the reference therein.
\end{rmk}

Inspired by \cite[Construction 4.2]{GarGurOliWig-2017}, we now introduce Brascamp-Lieb operators for arbitrary quivers.

\begin{definition} \label{BL-operator-def} Let $\V \in \rep(Q,\dd)$ be a $\dd$-dimensional representation of $Q$.
\begin{enumerate}
\item The \emph{Brascamp-Lieb} operator $T_{\V, \sigma}$ associated to $(\V, \sigma)$ is defined to be the completely positive operator with Kraus operators $\V^{i,j,a}_{q,r}$, $(i,j,a,q,r) \in \s$, i.e.
\begin{align*}
T_{\V,\sigma}: \RR^{N \times N} & \to \RR^{N \times N}\\
X& \to T_{\V,\sigma}(X):=\sum_{(i,j,a,q,r)}(\V^{i,j,a}_{q,r})^T\cdot X \cdot \V^{i,j,a}_{q,r}
\end{align*}

\item The \emph{capacity} $\capa_Q(\V,\sigma)$ of $(\V,\sigma)$ is defined to be the capacity of $T_{V, \sigma}$, i.e.
$$
\capa_Q(\V,\sigma):=\inf \{\Det(T_{\V,\sigma}(X)) \mid  X \in \s^{+}_N,  \Det(X)=1 \}.
$$
(Here, for a given positive integer $d$, we denote by $\s^{+}_d$ the set of all $d \times d$ (symmetric) positive definite real matrices.)
\end{enumerate}
\end{definition}

\begin{rmk} 
\begin{enumerate}
\item We point out that completely positive operators are usually defined over $\CC$, and the infimum defining their capacity is taken over positive definite complex matrices. However, if $T$ is defined by real Kraus operators then one can simply work with positive definite \emph{real} matrices in the definition of the capacity of $T$ (see \cite[Remark 2.7]{GarGurOliWig-2017}).  

\item Any completely positive operator $T$ with Kraus operators $A_1, \ldots, A_l$ can be viewed as a Brascamp-Lieb operator for the generalized Kronecker quiver with $l$ arrows, representation $\V=(A_1, \ldots, A_l)$, and weight $\sigma=(1,-1)$. However, it is important to keep $Q$ arbitrary and not simply reduce the considerations to generalized Kronecker quivers. Indeed, Theorem \ref{cap-semi-stab-thm} allows us to interpret quiver semi-stability for arbitrary bipartite (and not just generalized Kronecker) quivers in terms of the positivity of the capacity of BL operators. As already mentioned before, this in turn leads to a deterministic polynomial time algorithm for checking whether a representation $\V$ of a bipartite quiver is semi-stable with respect to an integral weight $\sigma$. 

\item Completely positive operators whose Kraus operators look similar to our $\V^{i,j,a}_{q,r}$ are also considered in \cite[Section 3]{Franks-2018}. However, our definition of $T_{\V,\sigma}$ is based on quiver invariant theoretic considerations, and the overall approach in this paper is different than that in \emph{loc. cit.}.
\end{enumerate}
\end{rmk}

\begin{remark}\label{general-bipartite-case-rmk} Brascamp-Lieb operators can be defined for quivers which are not necessarily bipartite. Specifically, let $Q=(Q_0,Q_1,t,h)$ be an arbitrary acyclic quiver and $\dd \in \NN^{Q_0}$ a dimension vector. Let $Q^+_0=\{v_1, \ldots, v_n \}$ and $Q^-_0=\{w_1, \ldots, w_m\}$ be two disjoint subsets of $Q_0$, and let $\sigma \in \ZZ^{Q_0} \cap \HH(\dd)$ be an integral weight such that $\sigma$ is positive on $Q^+_0$, negative on $Q^-_0$, and zero elsewhere.

Let $Q^{\pm}$ be the bipartite quiver with set of vertices $Q^{+}_0 \cup Q^{-}_0$. For every oriented path $p$ in $Q$ from $v_i$ to $w_j$, we define an arrow $a_p$ in $Q^{\pm}$ from $v_i$ to $w_j$. Given a representation $\V$ of $Q$, let $\V^{\pm}$ be the representation of $Q^{\pm}$ defined by
\begin{itemize}
\item $\V^{\pm}(v_i)=\V(v_i)$, $\V^{\pm}(w_j)=\V(w_j)$ for all $i \in [n]$, $j \in [m]$, and
\smallskip
\item $\V^{\pm}(a_p)=\V(p)$ for every arrow $a_p$ in $Q^{\pm}$.
\end{itemize}	
We then simply define $T_{V,\sigma}:=T_{V^{\pm},\sigma}$, and $\capa_Q(\V,\sigma):=\capa_{Q^{\pm}}(\V^{\pm}, \sigma)$.
\end{remark}

To prove our first Theorem \ref{cap-semi-stab-thm}, we require the following very useful general criterion addressing the positivity of the capacity of a completely positive operator. 

\begin{lemma} (\cite[Corollary 3.15]{GarGurOliWig-2017}) \label{cap-pos-general-lemma} Let $T:\RR^{N \times N} \to \RR^{N \times N}$ be a completely positive operator. Then $\capa(T)>0$ if and only if 
$$
\rk(X) \leq \rk T^*(X), \forall  X \succeq 0.
$$
\end{lemma}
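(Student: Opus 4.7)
The plan is to prove both implications using the Hilbert--Schmidt duality $\tr(T(X)Y) = \tr(X\, T^*(Y))$ between $T$ and its adjoint.

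For the forward direction, I would argue by contrapositive. Given $Y \succeq 0$ with $\rk T^*(Y) < \rk Y$, a dimension count (using $\dim(\mathrm{range}(Y) \cap \ker T^*(Y)) \geq \rk Y - \rk T^*(Y) > 0$) shows that the intersection $\mathrm{range}(Y) \cap \ker T^*(Y)$ is nontrivial. The multiplicativity/AM--GM estimate $\Det(A)\Det(B) = \Det(A^{1/2}BA^{1/2}) \leq (\tr(AB)/N)^N$ for $A,B \succ 0$, combined with the adjoint identity, yields
\[
\Det(T(X))\,\Det(Y) \;\leq\; \left(\tfrac{1}{N}\tr(X\,T^*(Y))\right)^N.
\]
Concentrating the mass of $X$ along a unit vector in $\mathrm{range}(Y) \cap \ker T^*(Y)$ while keeping $\Det X = 1$ drives $\tr(X\,T^*(Y))$ to zero along a sequence; a mild perturbation ($Y \rightsquigarrow Y + \epsilon I$) handles the case when $Y$ is singular, after which one passes to the limit. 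This forces $\capa(T) = 0$.

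For the reverse direction, I would use the operator scaling framework of Gurvits. The rank non-decrease of $T^*$ on PSD matrices is precisely the hypothesis needed to guarantee that the alternating left/right scaling iteration $T \mapsto L^T T(R \cdot R^T) L$---with $L, R$ chosen at each step to push $T(I)$ and $T^*(I)$ toward the identity---converges without collapse. A potential-function argument using $\log\Det L + \log\Det R$ keeps the scaling matrices in a compact region, and any subsequential limit is a doubly-stochastic operator whose capacity is at least $1$ by Gurvits's determinantal inequality. Since capacity transforms in a controlled multiplicative fashion under scaling by invertible matrices, the original $\capa(T)$ must itself be positive.

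The main obstacle is the reverse direction, specifically ruling out that the scaling matrices degenerate in the limit. Rank non-decrease of $T^*$ plays the role of a strict-positivity surrogate: without it, the iteration can drift to the boundary of $\GL$ and the potential argument breaks down, whereas with it one obtains the uniform bounds needed to extract a convergent subsequence. The forward direction, by contrast, is a comparatively direct consequence of the pairing inequality above once one locates the common subspace of $\mathrm{range}(Y)$ and $\ker T^*(Y)$.
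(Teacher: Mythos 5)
First, a point of reference: the paper offers no proof of this lemma at all --- it is quoted verbatim from \cite[Corollary 3.15]{GarGurOliWig-2017} and used as a black box --- so your attempt is a reconstruction of an external result rather than a variant of anything in the text. As written it has a genuine gap in each direction. In the forward direction your pairing inequality $\Det(T(X))\,\Det(Y)\le\left(\tr(X\,T^*(Y))/N\right)^N$ is correct, but it only yields $\capa(T)=0$ when $\Det(Y)>0$, i.e.\ when $\rk(Y)=N$; the essential violating instances are singular $Y$ (projections onto shrunk subspaces), where the inequality degenerates to $0\le 0$. The proposed repair $Y\rightsquigarrow Y+\epsilon I$ fails quantitatively: your vector $v$ is no longer in $\ker T^*(Y+\epsilon I)$, so $\tr(X_t\,T^*(Y+\epsilon I))$ acquires a term $\epsilon\,t^{N-1}\sum_i\|A_i^Tv\|^2$ while $\Det(Y+\epsilon I)\sim\epsilon^{N-r}$ with $r=\rk Y$; optimizing $\epsilon$ against $t$ gives a bound of order $t^{(N-1)(N-r)-r}$, whose exponent is negative only when $r=N$. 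The correct argument spreads $X_t$ over the whole subspace $K=\ker T^*(Y)$ of dimension $N-s$, $s=\rk T^*(Y)$ --- not over a single vector, and not over $\mathrm{range}(Y)\cap K$, which is also too small in general --- uses the inclusion $A_i^TK\subseteq(\mathrm{range}\,Y)^\perp$, and replaces the global AM--GM by Fischer's inequality $\Det(M)\le\Det(M_{UU})\,\Det(M_{U^\perp U^\perp})$ with $U=\mathrm{range}(Y)$; the dimension gap $N-s>N-r$ is what forces $\Det(T(X_t))\to 0$.

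In the reverse direction the sketch defers the entire difficulty to the assertion that rank non-decrease keeps the scaling matrices in a compact region. That assertion \emph{is} the theorem: the standard potential-function analysis of the operator Sinkhorn iteration uses the capacity itself as the potential and needs an a priori positive lower bound on $\capa(T)$ --- exactly what is to be proved --- in order to rule out collapse, so invoking convergence of the iteration here is circular. The known proofs close this loop by an independent argument (Gurvits's inductive reduction, or the nonvanishing of a determinantal semi-invariant which yields an explicit positive lower bound on $\capa(T)$ for rank non-decreasing operators), and no substitute for that step appears in your outline.
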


We point out that the proof below is an adaptation of that of \cite[Lemma 4.4]{GarGurOliWig-2017} to our general quiver set-up. Nonetheless, we include it for completeness and convenience of the reader. 

\begin{proof}[Proof of Theorem \ref{cap-semi-stab-thm}] We will prove that $\capa_Q(\V,\sigma)>0$ if and only if
\begin{equation} \label{semi-stab-ineq}
\sum_{i=1}^n \sigma_{+}(v_i) \dim \V'(v_i) \leq \sum_{j=1}^m \sigma_{-}(w_j) \dim \left( \sum_{i=1}^n \sum_{a \in \ar_{i,j}}\V(a)(\V'(v_i)) \right),
\end{equation}
for all subspaces $\V'(v_i) \leq \RR^{\dd(v_i)}$, $\forall i \in [n]$. The latter is easily seen to be equivalent to $\V$ being $\sigma$-semi-stable.

We know from Lemma \ref{cap-pos-general-lemma} that 
$$
\capa(\V, \sigma)>0 \Longleftrightarrow \rk(X) \leq \rk(T^*_{\V, \sigma}(X)), \forall  N \times N \text{~matrices~} X \succeq 0.
$$
\noindent
By definition, 
$$
T^*_{\V, \sigma}(X)=\sum_{(i,j,a,q,r)}\V^{i,j,a}_{q,r}\cdot X \cdot (\V^{i,j,a}_{q,r})^T, \forall X \in \RR^{N \times N}.
$$ 
Viewing each $N \times N$ matrix $X$ as an $M'\times M'$ block matrix, we get that for each $(i,j,a,q,r) \in \s$, the matrix 
$$
\V^{i,j,a}_{q,r} \cdot X \cdot (\V^{i,j,a}_{q,r})^T
$$ 
has an $M \times M$ block matrix structure whose $(q,q)$-block entry is
$$
\V(a)\cdot X_{rr}\cdot (\V(a))^T,
$$
and all other blocks are zero. So, $T^*_{\V, \sigma}(X)$ is the $M \times M$ block-diagonal matrix whose $(q,q)$-block-diagonal entry is
$$
\sum_{i=1}^n \sum_{a \in \ar_{i,j}} \V(a) (\sum_{r \in \ii} X_{rr}) \V(a)^T,
$$
for all $q \in \jj$ and $j \in [m]$. It now follows that
\begin{align*}
 \rk(X) \leq & \rk(T^*_{\V, \sigma}(X)), \forall N \times N \text{~matrices~} X \succeq 0\\  
&\rotatebox[origin=c]{-90}{$\Leftrightarrow$} \notag \\ 
\sum_{i=1}^n \sum_{r \in \ii} \rk(X_r) \leq \sum_{j=1}^m\sigma_{-}(w_j) & \rk \left( \sum_{i=1}^n \sum_{a \in \ar_{i,j}} \V(a) \left( \sum_{r \in \ii} X_r \right) \V(a)^T \right) \hspace{25pt} (\star)
\end{align*}
for all positive semi-definite matrices $X_r \in \RR^{\dd(v_i) \times \dd(v_i)}$ with $r \in \ii$ and $i \in [n]$. 

\bigskip
\noindent
($\Longrightarrow$) Let us assume that the linear homogeneous inequalities $(\star)$ hold for all positive semi-definite matrices $X_r \in \RR^{\dd(v_i) \times \dd(v_i)}$ with $r \in \ii$ and $i \in \{1, \ldots, n \}$.  

Let $\V'(v_i) \leq \RR^{\dd(v_i)}$, $i \in [n] $, be arbitrary subspaces. Choose an orthonormal basis $\{u^i_1, \ldots, u^i_{\dd'(i)}\}$ for each $\V'(v_i)$ and set
$$
X_r=\sum_{l=1}^{\dd'(v_i)} u_l^i \cdot (u_l^i)^T,
$$
for every $r \in \ii$. Plugging these matrices into $(\star)$, we get
\begin{equation} \label{eq1}
\sum_{i=1}^n \sigma_{+}(v_i)\cdot \dim \V'(v_i) \leq \sum_{j=1}^m \sigma_{-}(w_j) \rk \left( \sum_{i=1}^n \sum_{a \in \ar_{i,j}} \sum_{r,l} \V(a)u^i_l (\V(a)u^i_l)^T \right).
\end{equation}

But each $\rk \left( \sum_{i=1}^n \sum_{a \in \ar_{i,j}} \sum_{r,l} \V(a)u^i_l (\V(a)u^i_l)^T \right)$ equals the dimension of the space spanned by the vectors $\V(a)u^i_l$, i.e.
\begin{equation} \label{eq2}
\rk \left( \sum_{i=1}^n \sum_{a \in \ar_{i,j}} \sum_{r,l} \V(a)u^i_l (\V(a)u^i_l)^T \right)=\dim \left(\sum_{i=1}^n \sum_{a \in \ar_{i,j}} \V(a)(\V'(v_i))  \right).
\end{equation}

It now follows from (\ref{eq1}) and (\ref{eq2}) that
$$
\sum_{i=1}^n \sigma_{+}(v_i) \dim \V'(v_i) \leq \sum_{j=1}^m \sigma_{-}(w_j) \dim \left( \sum_{i=1}^n \sum_{a \in \ar_{ij}}\V(a)(\V'(v_i)) \right).
$$

\bigskip
\noindent 
$(\Longleftarrow)$ Let $X_r \in \RR^{\dd(v_i)\times \dd(v_i)}$, $r \in \ii$, $i \in \{1, \ldots, n \}$, be arbitrary positive semi-definite matrices. For each such $r$ and $i$, let $\{u^{i,r}_1, \ldots, u^{i,r}_{\dd_{i,r}} \}$ be an orthonormal set of vectors in $\RR^{\dd(v_i)}$ such that
$$
X_r=\sum_{l=1}^{\dd_{i,r}} \lambda^{i,r}_l u^{i,r}_l \cdot (u^{i,r}_l)^T,
$$
with the $\lambda^{i,r}_l>0$; in particular, $\rk(X_r)=\dd_{i,r}$. Now, define
$$
\V'(v_i)=\Span \left( \sqrt{\lambda^{i,r}_l} \cdot u^{i,r}_l \mid r \in \ii, 1 \leq l \leq \dd_{i,r} \right) \leq \RR^{\dd(v_i)}.
$$
Working with these subspaces in $(\ref{semi-stab-ineq})$, we get that $(\star)$ holds all positive semi-definite matrices $X_r$. In other words, $\capa(T_{\V, \sigma})>0$.
\end{proof}

\section{Brascamp-Lieb constants from capacity of quiver representations} 
Let $Q=(Q_0,Q_1,t,h)$ be a bipartite quiver with set of source vertices $Q_0^{+}=\{v_1, \ldots, v_n\}$ and set of sink vertices $Q_0^{-}=\{w_1,\ldots, w_m\}$. Let $\dd \in \NN^{Q_0}$ be a dimension vector of $Q$ and $\sigma \in \HH(\dd) \cap \ZZ^{Q_0}$ a weight such that $\sigma$ is positive on $Q_0^+$ and negative on $Q_0^-$. Recall the notation from Section \ref{BL-operators-sec}:

\begin{itemize}
\smallskip
\item $\sigma_{+}(v_i)=\sigma(v_i), \forall i \in [n]$ and $\sigma_{-}(w_j)=-\sigma(w_j), \forall j \in [m]$;

\smallskip
\item $N=\sum_{i=1}^n \sigma_{+}(v_i)\dd(v_i)=\sum_{j=1}^m \sigma_{-}(w_j)\dd(w_j)$;

\smallskip
\item $\ar_{i,j}$ is the set of arrows from $v_i$ to $w_j$ in $Q$ for all $i \in\{1, \ldots, n \}$ and $j \in \{1, \ldots, m \}$;

\smallskip
\item
$M:=\sum_{j=1}^m \sigma_{-}(w_j)$, and $M':=\sum_{i=1}^n \sigma_{+}(v_i)$.
\end{itemize}

\noindent
For each $j \in \{1, \ldots, m \}$ and $i \in \{1, \ldots, n \}$, we furthermore define 
$$
\jj:=\{q \in \ZZ \mid \sum_{k=1}^{j-1} \sigma_{-}(w_k) < q \leq  \sum_{k=1}^{j} \sigma_{-}(w_k) \},
$$
and 
$$
\ii:=\{r \in \ZZ \mid \sum_{k=1}^{i-1} \sigma_{+}(v_k) < r \leq  \sum_{k=1}^{i} \sigma_{+}(v_k) \}.
$$

In Lemma \ref{cap-compute-lemma} below we provide a more explicit formula for the capacity of a quiver datum$(\V, \sigma)$ that will lead us to the definition of the BL constant associated to $(\V, \sigma)$. For this, we recall the following well-known facts. Let $X$ be a positive semi-definite $N\times N$ matrix, viewed as an $M \times M$ block matrix. For each $j \in [m]$ and $q \in \jj$, denote by $X_{qq}$ the $(q,q)$-block-diagonal entry of $X$; it is of size $\dd(w_j) \times \dd(w_j)$. Then we have that

\begin{equation} \label{det-diag}
\det(X) \leq \prod_{j=1}^m \prod_{q \in \jj} \det(X_{qq})
\end{equation}
Next, for any $j \in \{1, \ldots, m \}$, set
$$
Y_j:={\sum_{q \in \jj} X_{qq} \over \sigma_{-}(w_j)}
$$ 
Then a generalization of Hadamard's inequality yields
\begin{equation}\label{Hadamard}
\prod_{q \in \jj} \det(X_{qq}) \leq \det(Y_j)^{\sigma_{-}(w_j)}
\end{equation}

\noindent
We are now ready to prove the following formula for $\capa_Q(\V, \sigma)$  for $\V \in \rep(Q,\dd)$.

\begin{lemma} \label{cap-compute-lemma} Let $\V \in \rep(Q,\dd)$ be a representation of $Q$. Then
\begin{align*}
&\capa_Q(\V,\sigma)= \\
&=\inf\left \{ { \prod_{i=1}^n \det \left( \sum_{j=1}^m \sigma_{-}(w_j)\left( \sum_{a \in \ar_{i,j}} \V(a)^T \cdot Y_j \cdot \V(a)  \right) \right)^{\sigma_{+}(v_i)}   \over \prod_{j=1}^m \det(Y_j)^{\sigma_{-}(w_j)}} \;\middle|\; Y_j \in \s^{+}_{\dd(w_j)}    \right \}.
\end{align*}
Furthermore, if the weight $\sigma$ is so that $\sigma_{+}(v_1)= \ldots= \sigma_{+}(v_n)=\omega>0$ then 
$$
\capa_Q(\V,\sigma)={1 \over \omega^{-N}} \cdot \left( \inf \left \{ {\prod_{i=1}^n \det\left( \sum_{j=1}^m p_j \left(\sum_{a \in \ar_{i,j}}\V(a)^T\cdot Y_j \cdot \V(a) \right) \right) \over \prod_{j=1}^m \det(Y_j)^{p_j}} \;\middle|\; Y_j \in \s^{+}_{\dd(w_j)} \right \} \right)^{\omega},
$$
where $p_j=-{\sigma(w_j)\over \omega}$ for all $j \in [m]$.
\end{lemma}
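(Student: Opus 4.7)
The plan is to explicitly compute $T_{\V,\sigma}(X)$ in block form, recognize that the determinantal constraint $\Det(X)=1$ can be sharpened using inequalities \eqref{det-diag} and \eqref{Hadamard}, and then dualize to verify the infimum is attained in the limit by block-diagonal test matrices built from the $Y_j$'s.

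\textbf{Step 1: Block computation of $T_{\V,\sigma}(X)$.} First I would view $X\succ 0$ as an $M\times M$ block matrix indexed by $q\in\bigsqcup_j\jj$ with $(q,q')$-block of size $\dd(w_j)\times\dd(w_{j'})$. A direct block multiplication shows that for each $(i,j,a,q,r)\in\s$, the matrix $(\V^{i,j,a}_{q,r})^T X \V^{i,j,a}_{q,r}$ has only one nonzero block, namely the $(r,r)$-block equal to $\V(a)^T X_{qq}\V(a)$. Summing over $\s$ gives that $T_{\V,\sigma}(X)$ is $M'\times M'$ block-diagonal, and that for every $r\in\ii$ the $(r,r)$-block equals the same matrix
$$B_i(X):=\sum_{j=1}^m\sum_{a\in\ar_{i,j}}\V(a)^T\Bigl(\sum_{q\in\jj}X_{qq}\Bigr)\V(a).$$
Since each $B_i(X)$ is repeated $\sigma_+(v_i)$ times along the diagonal, $\Det(T_{\V,\sigma}(X))=\prod_{i=1}^n\det(B_i(X))^{\sigma_+(v_i)}$. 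Writing $Y_j:=\bigl(\sum_{q\in\jj}X_{qq}\bigr)/\sigma_-(w_j)$, the block $B_i(X)$ takes precisely the form appearing in the numerator of the claimed formula.

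\textbf{Step 2: Lower bound via \eqref{det-diag} and \eqref{Hadamard}.} For any $X\succ 0$ with $\Det(X)=1$, the chain
$$1=\Det(X)\ \leq\ \prod_{j=1}^m\prod_{q\in\jj}\det(X_{qq})\ \leq\ \prod_{j=1}^m\det(Y_j)^{\sigma_-(w_j)}$$
forces $\prod_j\det(Y_j)^{\sigma_-(w_j)}\geq 1$. Dividing $\Det(T_{\V,\sigma}(X))=\prod_i\det(B_i)^{\sigma_+(v_i)}$ by this quantity (which is $\geq 1$) can only decrease it, producing the bound
$$\Det(T_{\V,\sigma}(X))\ \geq\ \frac{\prod_i\det(B_i)^{\sigma_+(v_i)}}{\prod_j\det(Y_j)^{\sigma_-(w_j)}},$$
which shows $\capa_Q(\V,\sigma)$ dominates the claimed infimum.

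\textbf{Step 3: Matching upper bound.} For each choice of $Y_j\in\s^+_{\dd(w_j)}$, I would construct the block-diagonal test matrix $X_0\succ 0$ whose $(q,q)$-block equals $Y_j$ whenever $q\in\jj$, and rescale $X=cX_0$ with $c=\bigl(\prod_j\det(Y_j)^{\sigma_-(w_j)}\bigr)^{-1/N}$ so that $\Det(X)=1$. Both inequalities in Step 2 become equalities for $X_0$ (the block-diagonal Hadamard is tight, and so is \eqref{Hadamard} since the summands are all equal). By linearity of $T_{\V,\sigma}$ and homogeneity of $\det$ on $N\times N$ matrices, the value of $\Det(T_{\V,\sigma}(X))$ equals exactly the quotient in the displayed formula, producing the matching upper bound. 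The two bounds together prove the first formula.

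\textbf{Step 4: The uniform-weight specialization.} In the case $\sigma_+(v_i)=\omega$ for all $i$, write $\sigma_-(w_j)=\omega p_j$. Factoring $\omega$ out of each $B_i$ yields $B_i=\omega\cdot C_i$ with $C_i=\sum_j p_j\sum_{a\in\ar_{i,j}}\V(a)^T Y_j\V(a)$. Using $\det(\omega C_i)=\omega^{\dd(v_i)}\det(C_i)$ and the identity $\sum_i\sigma_+(v_i)\dd(v_i)=N$, the numerator becomes $\omega^N\prod_i\det(C_i)^{\omega}$ while the denominator becomes $\bigl(\prod_j\det(Y_j)^{p_j}\bigr)^\omega$; pulling the positive exponent $\omega$ outside the infimum gives the second formula.

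The only nontrivial computation is Step 1: once the block structure of $T_{\V,\sigma}(X)$ is correctly extracted — in particular the fact that all $(r,r)$ blocks for $r\in\ii$ coincide — the rest is an elegant application of the two determinantal inequalities stated just before the lemma, with the constructed block-diagonal $X_0$ showing tightness. I expect the bookkeeping of the block indices $q\in\jj$ and $r\in\ii$ to be the main place where care is needed.
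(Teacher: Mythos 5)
Your proposal is correct and follows essentially the same route as the paper: the same block computation of $T_{\V,\sigma}(X)$ showing it is block-diagonal with each block $B_i(X)$ repeated $\sigma_+(v_i)$ times, the same use of inequalities \eqref{det-diag} and \eqref{Hadamard} to reduce to the averages $Y_j$, and the same rescaling to handle the normalization $\Det(X)=1$. The only difference is organizational — you package the argument as matching lower and upper bounds (with explicit block-diagonal test matrices for the upper bound), whereas the paper writes it as a chain of equal infima — and your Step 4 bookkeeping for the uniform-weight case is also correct.
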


\begin{proof}
We have that
\begin{align*}
&\capa_Q(\V, \sigma)= \inf \{ \det(T_{\V,\sigma}(X)) \mid X \in \s^+_N, \det(X)=1 \}\\
=&\inf \left \{ \prod_{i=1}^n \det \left( \sum_{j=1}^m \sum_{a \in \ar_{i,j}}  \V(a)^T \left( \sum_{q \in \jj} X_{qq} \right) \V(a)   \right)^{\sigma_{+}(v_i)} \;\middle|\; X \in \s^{+}_{N} , \det(X)=1 \right \} \\
\stackrel{(iii)}{=}&\inf \left \{ \prod_{i=1}^n \det \left( \sum_{j=1}^m \sum_{a \in \ar_{i,j}}  \V(a)^T \left( \sum_{q \in \jj} X_q \right) \V(a)   \right)^{\sigma_{+}(v_i)} \;\middle|\; X_q \in \s^{+}_{\dd(w_j)},  \prod_{j=1}^m \prod_{q \in \jj}\det(X_q)=1 \right \} \\
\stackrel{(iv)}{=}&\inf \left \{ \prod_{i=1}^n \det \left( \sum_{j=1}^m\sigma_{-}(w_j) \left( \sum_{a \in \ar_{i,j}}  \V(a)^T \cdot Y_j \cdot \V(a) \right)  \right)^{\sigma_{+}(v_i)} \;\middle|\; Y_j \in \s^{+}_{\dd(w_j)},  \prod_{j=1}^m \det(Y_j)^{\sigma_{-}(w_j)}=1 \right \} \\
\stackrel{(v)}{=}&\inf \left \{ { \prod_{i=1}^n \det \left( \sum_{j=1}^m \sigma_{-}(w_j) \left( \sum_{a \in \ar_{i,j}}  \V(a)^T \cdot Y_j \cdot \V(a) \right)   \right)^{\sigma_{+}(v_i)} \over \prod_{j=1}^m \det(Y_j)^{\sigma_{-}(w_j)} } \;\middle|\; Y_j \in \s^{+}_{\dd(w_j)} \right \}
\end{align*}
To prove the third equality above, one can simply use $(\ref{det-diag})$. Indeed, it is clear that the infimum displayed on the second line above is less than or equal to that on the third line. To prove the reverse inequality, let $X$ be a positive definite $N \times N$ real matrix with $\det(X)=1$ and let us denote by $X_q$ the block-diagonal entries of $X$. Then, by $(\ref{det-diag})$, we have that 
$$
1=\det(X) \leq C:=\prod_{j=1}^m \prod_{q \in \jj}\det(X_q).
$$ 
Setting $\widetilde{X}_q={1 \over \sqrt[N]{C}}X_q$, we get that $\prod_{j=1}^m \prod_{q \in \jj}\det(\widetilde{X}_q)=1$, and
\begin{align*}
\prod_{i=1}^n \det &  \left( \sum_{j=1}^m \sum_{a \in \ar_{i,j}}  \V(a)^T \left( \sum_{q \in \jj} \widetilde{X}_q \right)  \V(a) \right)^{\sigma_{+}(v_i)}=\\
&={1 \over C} \prod_{i=1}^n \det \left( \sum_{j=1}^m \sum_{a \in \ar_{i,j}}  \V(a)^T \left( \sum_{q \in \jj} X_q \right) \V(a)   \right)^{\sigma_{+}(v_i)} 
\end{align*}
This now gives get the reverse inequality, proving the third equality above. For $(iv)$, one can simply use the generalized Hadamard's inequality $(\ref{Hadamard})$. For $(v)$, simply work with ${Y_j \over \sqrt[N]{\prod_{j=1}^m \det(Y_j)^{\sigma_{-}(w_j)}}}$, $j \in \{1, \ldots, m \}$, in the line above, where $Y_j \in \Mat_{\dd(w_j)\times \dd(w_j)}$, $j \in [m]$, are arbitrary positive definite matrices.

The formula for $\capa_Q(\V, \sigma)$ when $\sigma_{+}$ is constant follows  immediately from the computations above.
\end{proof}

Let $\chi_{\sigma}:\GL(\dd)\to \RR^{\times}$ be the character induced by $\sigma$, i.e. $\chi_{\sigma}(A)=\prod_{x \in Q_0}\det(A(x))^{\sigma(x)}$ for all $A=(A(x))_{x \in Q_0} \in \GL(\dd)$, and denote its kernel by $\GL(\dd)_{\sigma}$. As a consequence of the lemma above, we get the following formula for the capacity along $\GL(\dd)$-orbits.

\begin{corollary} \label{capa-char-formula-coro} Let $\V \in \rep(Q,\dd)$ and $A=(A(x))_{x \in Q_0} \in \GL(\dd)$. Then 
$$
\capa_Q(\V, \sigma)=(\chi_{\sigma}(A))^2 \cdot \capa_Q(A \cdot \V, \sigma).
$$
In particular, if $A \in \GL(\dd)_{\sigma}$ then $$\capa_Q(\V,\sigma)=\capa_Q(A \cdot \V, \sigma),$$ i.e. the capacity is constant along $\GL(\dd)_{\sigma}$-orbits.
\end{corollary}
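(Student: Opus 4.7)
The plan is to apply the explicit capacity formula from Lemma \ref{cap-compute-lemma} directly to the representation $A\cdot \V$ and compare the result with the same formula for $\V$, by performing a change of variables on the positive definite matrices $Y_j$ over which the infimum is taken. Concretely, for an arrow $a \in \ar_{i,j}$ one has $(A\cdot\V)(a)=A(w_j)\V(a)A(v_i)^{-1}$, so the inner quadratic form appearing in the numerator of Lemma \ref{cap-compute-lemma} transforms as
\[
(A\cdot\V)(a)^T \cdot Y_j \cdot (A\cdot\V)(a) = A(v_i)^{-T}\cdot \V(a)^T \cdot \widetilde{Y}_j \cdot \V(a) \cdot A(v_i)^{-1},
\]
where $\widetilde{Y}_j := A(w_j)^T Y_j A(w_j)$. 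Since $A(w_j) \in \GL(\dd(w_j),\RR)$, the map $Y_j \mapsto \widetilde{Y}_j$ is a bijection of $\s^{+}_{\dd(w_j)}$ onto itself.

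Next I would factor out $A(v_i)^{-T}$ and $A(v_i)^{-1}$ from the full sum $\sum_j \sigma_-(w_j)\sum_{a\in\ar_{i,j}}(A\cdot\V)(a)^T Y_j (A\cdot\V)(a)$, take determinants (picking up a factor $\det A(v_i)^{-2}$), and raise to the power $\sigma_+(v_i)=\sigma(v_i)$. Taking the product over $i\in[n]$, the numerator in Lemma \ref{cap-compute-lemma} evaluated at $(A\cdot\V,\sigma,Y)$ equals $\prod_{i=1}^n \det(A(v_i))^{-2\sigma(v_i)}$ times the corresponding numerator for $(\V,\sigma,\widetilde Y)$. For the denominator, $\det(Y_j)=\det(A(w_j))^{-2}\det(\widetilde Y_j)$ gives a factor $\prod_{j=1}^m \det(A(w_j))^{-2\sigma_-(w_j)}=\prod_{j=1}^m \det(A(w_j))^{2\sigma(w_j)}$ relating the two denominators.

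Combining the two ratios, the scalar factor that pops out is
\[
\prod_{i=1}^n \det(A(v_i))^{-2\sigma(v_i)} \cdot \prod_{j=1}^m \det(A(w_j))^{-2\sigma(w_j)} \;=\; \prod_{x\in Q_0}\det(A(x))^{-2\sigma(x)} \;=\; \chi_\sigma(A)^{-2},
\]
independently of the matrices $Y_j$. Taking the infimum over $Y_j \in \s^+_{\dd(w_j)}$ on both sides, and using the bijection $Y \leftrightarrow \widetilde Y$ to identify the infimum on the $(\V,\sigma)$-side with $\capa_Q(\V,\sigma)$, I obtain $\capa_Q(A\cdot\V,\sigma) = \chi_\sigma(A)^{-2}\cdot \capa_Q(\V,\sigma)$, which is the claimed identity after rearrangement. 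The second assertion follows immediately by specializing to $A\in \GL(\dd)_\sigma = \ker\chi_\sigma$.

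No real obstacle is expected here; the only care needed is to keep track of the signs (the distinction between $\sigma_\pm$ and $\sigma$) and to verify that the change of variables $Y_j \mapsto \widetilde Y_j$ is a bijection of the positive definite cone, which is clear since $A(w_j)$ is invertible.
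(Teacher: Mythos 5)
Your proposal is correct and follows exactly the route the paper intends: the corollary is stated there as an immediate consequence of Lemma \ref{cap-compute-lemma}, and your change of variables $Y_j \mapsto A(w_j)^T Y_j A(w_j)$ together with the bookkeeping of determinant factors is precisely the computation the paper leaves to the reader. The sign tracking ($\sigma_{\pm}$ versus $\sigma$) and the observation that a positive constant factor commutes with the infimum are both handled correctly.
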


We are now ready to define BL constants for bipartite quivers. This definition is inspired by Lieb's remarkable determinantal formula \cite{Lieb-1990} for the classical BL constants and Lemma \ref{cap-compute-lemma}. 

\begin{definition} (\textbf{Brascamp-Lieb constants for quiver datum}) \label{BL-defn-quivers}  Let $\V \in \rep(Q,\dd)$ be a $\dd$-dimensional representation and let $\tup=(p_1, \ldots, p_m) \in \QQ^m_{>0}$ be an $m$-tuple of positive rational numbers such that $\sum_{i=1}^n \dd(v_i)=\sum_{j=1}^m p_j \dd(w_j)$. We define the \emph{Brascamp-Lieb constant} $\bl_Q(\V,\tup)$ associated to $(\V,\tup)$ by
\begin{equation}
\bl_Q(\V,\tup)= \sup \left \{ \left( { \prod_{j=1}^m \det(Y_j)^{p_j} \over \prod_{i=1}^n \det \left( \sum_{j=1}^m p_j \left(\sum_{a \in \ar_{ij}}\V(a)^T\cdot Y_j \cdot \V(a) \right) \right)} \right)^{1 \over 2} \;\middle|\; Y_j \in \s^{+}_{\dd(w_j)} \right \}
\end{equation}

\noindent
(When computing the supremum above, the convention is that ${1 \over 0}$ is $\infty$.)
\end{definition}

\begin{remark}\label{BL-constant-rmk} 
\begin{enumerate}[(a)]
\item Using Lieb's formula (see \cite{Lieb-1990}), we obtain that $\bl_{\mathcal{Q}_m}(\V, \tup)$, where $\mathcal{Q}_m$ is the $m$-subspace quiver, is precisely the classical BL constant $\bl(\mathbf{\V}, \tup)$.\\

\item Keep the same notation as in the definition above. Let $\sigma_{\tup}$ be the integral weight of $Q$ defined by $\sigma_{\tup}(x_i)=\omega$ for all $i \in [n]$ and $\sigma_{\tup}(w_j)=-\omega\cdot p_j$ for all $j \in [m]$. Then, according to Lemma \ref{cap-compute-lemma}, we have that
\begin{equation}
\bl_Q(\V, \tup)=\begin{cases}
{1 \over {\sqrt[2\omega]{\omega^{-N} \capa_Q(\V,\sigma_{\tup})}}} & \text{~if~} \capa_Q(\V,\sigma_{\tup})>0  \\
\infty & \text{~if~} \capa_Q(\V,\sigma_{\tup})=0 
\end{cases}
\end{equation}

\item According to Theorem \ref{cap-semi-stab-thm}, $\bl_Q(\V, \tup) <\infty$ if and only if $\sum_{i=1}^n \dd(v_i)=\sum_{j=1}^m p_j \dd(w_j)$ and 

\begin{equation} \label{semi-stab-ineq-2}
\sum_{i=1}^n \dim \V'(v_i) \leq \sum_{j=1}^m p_j \dim \left( \sum_{i=1}^n \sum_{a \in \ar_{ij}}\V(a)(\V'(v_i)) \right),
\end{equation}
for all subspaces $\V'(v_i) \leq \RR^{\dd(v_i)}$, $\forall i \in [n]$.  In particular, when $Q$ is the $m$-subspace quiver, this recovers Bennet-Carbery-Christ-Tao's finitness result from \cite[Theorem 1.13]{BenCarChrTao-2008}.
\end{enumerate}
\end{remark}

\section{Geometric quiver data} \label{geo-quiver-data-sec}
Let $Q=(Q_0,Q_1,t,h)$ be a bipartite quiver with set of source vertices $Q_0^{+}=\{v_1, \ldots, v_n\}$, and set of sink vertices $Q_0^{-}=\{w_1,\ldots, w_m\}$. Let $\ar_{i,j}$ be the set of all arrows from $v_i$ to $w_j$ for all $i \in [n]$ and $j \in [m]$.

Let $\dd \in \NN^{Q_0}$ be a dimension vector and let $\sigma \in \mathbb{H}(\dd) \cap \ZZ^{Q_0}$ be a weight orthogonal to $\dd$ such that $\sigma$ is positive on $Q_0^{+}$ and negative on $Q_0^-$. Recall that $\sigma_+(v_i)=\sigma(v_i)$, $\forall i \in \{1, \ldots,n\}$, and $\sigma_-(w_j)=-\sigma(w_j)$, $\forall j \in \{1, \ldots,m\}$.

Let $V \in \rep(Q,\dd)$ be a $\dd$-dimensional representation and $T_{\V,\sigma}$ the Brascamp-Lieb operator associated to $(\V,\sigma)$. Recall that $T_{\V,\sigma}$ is a \emph{doubly stochastic} operator if $T_{\V,\sigma}(\Id)=T^*_{\V,\sigma}(\Id)=\Id$ which is equivalent to
\begin{equation}\label{geom-eq-1}
\sum_{j=1}^m \sigma_{-}(w_j) \sum_{a \in \ar_{i,j}} \V(a)^T \cdot \V(a)=\Id_{\dd(v_i)}, \forall i \in [n],
\end{equation}
and
\begin{equation}\label{geom-eq-2}
\sum_{i=1}^n \sigma_+(v_i) \sum_{a \in \ar_{i,j}} \V(a)\cdot \V(a)^T=\Id_{\dd(w_j)}, \forall j \in [m].
\end{equation}

\begin{definition} \label{quiver-geom-def} We call $(V,\sigma)$ a \emph{geometric quiver datum} if $V$ satisfies the matrix equations $(\ref{geom-eq-1})$ and $(\ref{geom-eq-2})$. 
\end{definition}

One of the advantages of working with geometric quiver data is that their capacity is known to be one (see \cite[Proposition 2.8 and Lemma 3.4]{GarGurOliWig-2015}), i.e. for a geometric datum $(\V,\sigma)$, we have that $$\capa_Q(\V,\sigma)=1.$$ 

\subsection{A character formula for the capacity of quiver data} Our goal in this section is to understand the matrix equations $(\ref{geom-eq-1})$ and $(\ref{geom-eq-2})$ in the context of quiver invariant theory. This will lead us to a character formula for the capacity of quiver data.

Recall that the affine space $\rep(Q,\dd)$ of $\dd$-dimensional representations of $Q$ is acted upon by the change-of-base group $\GL(\dd)=\prod_{x \in Q_0}\GL(\dd(x), \RR)$ by simultaneous conjugation. The character induced by $\sigma$ is denoted by $\chi_{\sigma}:\GL(\dd) \to \RR^{\times}$ and its kernel is denoted by $\GL(\dd)_{\sigma}$. 

\begin{theorem} \label{quiver-geom-data-thm}  
\begin{enumerate}[(i)]
\item For a $\sigma$-semi-stable representation $\V \in \rep(Q,\dd)$, consider the real algebraic variety
$$
\G_{\sigma}(\V):=\{A \in \GL(\dd) \mid (A\cdot \V, \sigma)\text{~is a geometric quiver datum}\}.
$$ 
Then
$$
\G_{\sigma}(\V) \neq \emptyset \Longleftrightarrow \V \text{~is~}\sigma-\text{polystable}.
$$

\item For a $\sigma$-semi-stable representation $\V \in \rep(Q,\dd)$, there exists a $\sigma$-polystable representation $\widetilde{\V}$ such that $\widetilde{\V} \in \overline{\GL(\dd)_{\sigma}\V}$. Furthermore, for any such $\widetilde{\V}$, the following formula holds:
\begin{equation}\label{capa-formula-eqn}
\capa_Q(\V,\sigma)=\capa_Q(\widetilde{\V},\sigma)=\chi_{\sigma}(A)^2, \forall A \in \G_{\sigma}(\widetilde{\V}).
\end{equation}
\end{enumerate}
\end{theorem}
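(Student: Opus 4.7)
The plan is to adapt Kempf--Ness theory to the real twisted setting. The central object is the smooth positive function
\[
\mathcal{N}_V(A) := \chi_{\sigma}(A)^2 \cdot \prod_{i=1}^n \det\Big( \sum_{j=1}^m \sigma_{-}(w_j) \sum_{a \in \ar_{i,j}} (A \cdot V)(a)^T (A\cdot V)(a) \Big)^{\sigma_{+}(v_i)}, \quad A \in \GL(\dd).
\]
Unwinding Lemma \ref{cap-compute-lemma} via the substitution $Y_j = A(w_j)^T A(w_j)$ yields $\capa_Q(V,\sigma) = \inf_{A \in \GL(\dd)} \mathcal{N}_V(A)$, and $\mathcal{N}_V$ descends to the symmetric space $O(\dd)\backslash \GL(\dd)$. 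A Lagrange-multiplier computation of the first-order condition for $\log \mathcal{N}_V$ identifies the critical points with $\G_{\sigma}(V)$: vanishing of the $A(v_i)$-gradient recovers the source-side equation (\ref{geom-eq-1}), and combined with the $O(\dd)$-coset normalization it recovers the sink-side equation (\ref{geom-eq-2}). Moreover, along any $1$-parameter subgroup $s \mapsto \exp(sS)A$ with $S$ block-diagonal symmetric, the function $s \mapsto \log \mathcal{N}_V(\exp(sS) A)$ is convex, by the standard log-concavity of $\det$ on $\s^{+}_d$. These facts drive both parts.

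For part (i), direction $(\Leftarrow)$: if $V$ is $\sigma$-polystable, then the $\GL(\dd)_{\sigma}$-orbit of $V$ is Zariski-closed by standard quiver invariant theory and $\capa_Q(V,\sigma) > 0$ by Theorem \ref{cap-semi-stab-thm}. Combining closedness with convexity, I would show via a Hilbert--Mumford style argument that $\mathcal{N}_V$ is proper modulo $\Stab_{\GL(\dd)}(V)$: a minimizing sequence escaping to infinity would extract a $1$-parameter subgroup destabilizing the orbit, contradicting its closedness. The infimum is therefore attained, producing $A \in \G_{\sigma}(V)$. For $(\Rightarrow)$: given $A_0 \in \G_{\sigma}(V)$, after replacing $V$ by $A_0 \cdot V$ we may assume $\Id$ is a critical point of $\log \mathcal{N}_V$. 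If $V$ were not polystable, a Jordan--H\"older $1$-parameter subgroup $\lambda: \RR^{\times} \to \GL(\dd)_{\sigma}$ (it lies in $\GL(\dd)_{\sigma}$ because each JH layer has $\sigma \cdot \ddim = 0$) would degenerate $V$ to $V' = \lim_{t \to 0}\lambda(t) V$ with $V' \not\cong V$. Then $s \mapsto \log \mathcal{N}_V(\lambda(e^s))$ is a convex function of $s$, bounded as $s \to -\infty$ (its limit equals $\mathcal{N}_{V'}(\Id) < \infty$) with a critical point at $s=0$; convexity forces constancy, hence $\lambda(t) V = V$ for all $t$, contradicting $V' \not\cong V$.

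For part (ii), standard Jordan--H\"older theory in the abelian category of $\sigma$-semi-stable representations with $\sigma \cdot \ddim = 0$ produces a filtration $0 = V^{(0)} \subsetneq \cdots \subsetneq V^{(r)} = V$ whose $\sigma$-stable quotients assemble into $\widetilde V = \bigoplus_k V^{(k)}/V^{(k-1)}$, polystable and realized as $\lim_{t \to 0}\lambda(t) V$ for a $1$-parameter subgroup $\lambda \subset \GL(\dd)_{\sigma}$. For the character formula, pick $A \in \G_{\sigma}(\widetilde V)$ by part (i); since $A \cdot \widetilde V$ is geometric, $\capa_Q(A \cdot \widetilde V, \sigma) = 1$ by \cite[Prop.~2.8 and Lemma~3.4]{GarGurOliWig-2015}, and Corollary \ref{capa-char-formula-coro} gives $\capa_Q(\widetilde V, \sigma) = \chi_{\sigma}(A)^2$. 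The equality $\capa_Q(V,\sigma) = \capa_Q(\widetilde V,\sigma)$ splits into the inequality $\leq$, which follows from upper semi-continuity of $\capa_Q$ (an infimum of continuous-in-$V$ functions) applied along $\lambda(t)V \to \widetilde V$ together with the $\GL(\dd)_{\sigma}$-invariance of capacity from Corollary \ref{capa-char-formula-coro}, and the inequality $\geq$, obtained by exploiting the block upper-triangular form of $V$ relative to the JH filtration: for any $X \succ 0$ with $\det X = 1$, a block-diagonal projection-and-rescaling of $X$ produces $X''$ with $\det X'' = 1$ and $\det T_{V,\sigma}(X) \geq \det T_{\widetilde V,\sigma}(X'')$ via a generalized Hadamard inequality on the Kraus sums.

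The main obstacle is the lower bound $\capa_Q(V,\sigma) \geq \capa_Q(\widetilde V, \sigma)$ in part (ii). The $2$-arrow Kronecker quiver already shows that for a non-polystable semi-stable $V$ the infimum defining $\capa_Q(V,\sigma)$ is only approached at the boundary of $\s^{+}_N$, so compactness does not deliver an honest minimizer and naive upper semi-continuity gives only one direction. Establishing the lower bound requires an explicit block-triangular monotonicity estimate whose logical content overlaps with the factorization theorem (part (3) of Theorem \ref{main-thm-2}), and the challenge is to extract precisely what is needed for parts (i)--(ii) without circularly invoking that later result.
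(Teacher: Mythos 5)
Your part (ii) contains the one genuine gap, and you have correctly located it yourself: the inequality $\capa_Q(\V,\sigma)\geq\capa_Q(\widetilde{\V},\sigma)$. Upper semi-continuity of an infimum of continuous functions only gives $\capa_Q(\V,\sigma)\leq\capa_Q(\widetilde{\V},\sigma)$ along the degeneration $\lambda(t)\V\to\widetilde{\V}$, and the block-triangular ``projection-and-rescaling'' estimate you sketch for the reverse direction is never carried out; as you note, it essentially is the factorization theorem, which the paper derives \emph{from} this result rather than the other way around. The paper closes the gap with a single citation: the capacity of a completely positive operator is \emph{continuous} (not merely upper semi-continuous) as a function of the Kraus operators, by \cite[Section 7]{GarGurOliWig-2017}. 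Combined with $\GL(\dd)_{\sigma}$-invariance (Corollary \ref{capa-char-formula-coro}), continuity makes $\capa_Q(-,\sigma)$ constant on $\GL(\dd)_{\sigma}$-orbit closures, which yields both inequalities at once and no block-triangular analysis is needed. Lower semi-continuity of the capacity is a substantive theorem of GGOW, so without either importing it or actually proving your monotonicity estimate, part (ii) is incomplete.

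Part (i) is essentially correct but proved by a different, more self-contained route. The paper rescales $W(a)=\sqrt{\sigma_{+}(ta)\sigma_{-}(ha)}\,\V(a)$ to turn the doubly-stochastic equations into the minimal-vector equations of Proposition \ref{main-prop-KN}, and then delegates everything to that proposition, which in turn rests on King's results over $\CC$, the Birkes/Borel--Harish-Chandra comparison of real and complex orbit closures, and real Kempf--Ness theory (\cite{Wal-2017}, \cite{Boh-Laf-2017}). You instead run the Kempf--Ness argument directly on the function $\mathcal{N}_{\V}$ (whose identification with the capacity via $Y_j=A(w_j)^TA(w_j)$ is correct). This buys independence from the real-versus-complex orbit comparison, at the cost of having to justify two standard but nontrivial facts you currently gloss over: that $\mathcal{N}_{\V}$ is proper modulo the stabilizer when the $\GL(\dd)_{\sigma}$-orbit is closed, and that constancy of $s\mapsto\log\mathcal{N}_{\V}(\lambda(e^s))$ along a one-parameter subgroup through a critical point forces $\lambda$ to fix $\V$ (this last step needs strict geodesic convexity transverse to the isometry-times-stabilizer orbit, i.e. exactly the content of \cite[Theorem 1.1]{Boh-Laf-2017}). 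With those two points made precise, your part (i) would stand; part (ii) needs the continuity input.
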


To prove this theorem, we require the following important result. It has been proved by King \cite{K} over the field of complex numbers. Here, we explain how to prove it over the real numbers.

\begin{prop} \label{main-prop-KN} Let $Q$ be a bipartite quiver, $\dd \in \NN^{Q_0}$ a dimension vector, and $\sigma \in \ZZ^{Q_0}$ an integral weight of $Q$ such that $\sigma \cdot \dd=0$. Consider the action of $\GL(\dd)$ on $\rep(Q,\dd) \times \RR$ given by 
$$
A\cdot (\V,z)=(A\cdot \V, \chi_{\sigma}(A)z), \forall A \in \GL(\dd), (\V,z) \in \rep(Q,\dd)\times \RR.
$$  
For a $\sigma$-semi-stable representation $W \in \rep(Q,\dd)$, the following statements are equivalent:
\begin{enumerate}[(1)]
\item $W$ is $\sigma$-polystable;

\item the $\GL(\dd)_{\sigma}$-orbit of $W$ is closed in $\rep(Q,\dd)$;

\item the $\GL(\dd)$-orbit of $(W,1)$ is closed in $\rep(Q,\dd)\times \RR$;

\item there exists a representation $W' \in \GL(\dd)W$ such that
\begin{equation} \label{minimal-eqn-1-lemma}
\sum_{j=1}^m \sum_{a \in \ar_{i,j}} W'(a)^T \cdot W'(a)=\sigma_{+}(v_i)\Id_{\dd(v_i)}, \forall i \in [n],
\end{equation}
\begin{equation}\label{minimal-eqn-2-lemma}
\sum_{i=1}^n  \sum_{a \in \ar_{i,j}} W'(a)\cdot W'(a)^T=\sigma_{-}(w_j)\Id_{\dd(w_j)}, \forall j \in [m].
\end{equation}
\end{enumerate}
\end{prop}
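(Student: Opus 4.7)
The plan is to adapt King's proof \cite{K} from the complex to the real setting, establishing the pairwise equivalences (2) $\Leftrightarrow$ (3), (1) $\Leftrightarrow$ (2), and (3) $\Leftrightarrow$ (4) in turn.

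The equivalence (2) $\Leftrightarrow$ (3) is the standard \emph{shifting trick}: the map $g \mapsto g \cdot (W,1) = (g \cdot W, \chi_{\sigma}(g))$ exhibits the $\GL(\dd)$-orbit of $(W,1)$ as an $\RR^{\times}$-bundle (via the coordinate $\chi_{\sigma}$) over the $\GL(\dd)_{\sigma}$-orbit of $W$, so closedness transfers between the two orbits. The only subtlety is the boundary case in which $\chi_{\sigma}(g_n) \to 0$ or $\infty$ along a convergent sequence; this is ruled out by the $\sigma$-semi-stability of $W$, which via Theorem \ref{cap-semi-stab-thm} prevents the relevant degenerations.

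For (1) $\Leftrightarrow$ (2), I would argue via Jordan--H\"older filtrations. For the forward direction, decompose $W \cong \bigoplus_i W_i^{n_i}$ with the $W_i$ pairwise non-isomorphic and $\sigma$-stable; each stable orbit is closed under the relevant kernel of $\chi_{\sigma}$, and a direct-sum argument then gives closedness of $\GL(\dd)_{\sigma} \cdot W$. For the converse, given $W$ semi-stable with closed $\GL(\dd)_{\sigma}$-orbit, take a filtration $0 = W^{(0)} \subsetneq \cdots \subsetneq W^{(r)} = W$ by $\sigma$-semi-stable subrepresentations with $\sigma$-stable composition factors; a compatible integer grading yields a one-parameter subgroup $\lambda(t) \subseteq \GL(\dd)_{\sigma}$ such that $\lim_{t \to 0} \lambda(t) \cdot W$ is the associated graded $\gr(W)$, which is $\sigma$-polystable. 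Closedness of the orbit then forces $W \cong \gr(W)$, so $W$ is $\sigma$-polystable.

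For (3) $\Leftrightarrow$ (4), I would apply the real Kempf--Ness (Richardson--Slodowy) theorem to the squared-norm function $F(g) := \|g \cdot W\|^2 + \chi_{\sigma}(g)^2$ on $\GL(\dd)$. Using the Cartan decomposition $\GL(\dd) = K \cdot \exp(\mathfrak{p})$ with $K = \prod_{x \in Q_0} O(\dd(x))$ and $\mathfrak{p}$ the space of tuples of symmetric matrices, closedness of the $\GL(\dd)$-orbit of $(W,1)$ is equivalent to $F$ attaining its infimum at some $g_0 \in \GL(\dd)$, which in turn is equivalent to the vanishing of the $\mathfrak{p}$-gradient of $F$ at $g_0$. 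Differentiating $F$ at $W' := g_0 \cdot W$, $z_0 := \chi_{\sigma}(g_0)$ along symmetric $X(v_i)$ and $X(w_j)$ separately yields
$$\sum_{j,\, a \in \ar_{i,j}} W'(a)^T W'(a) = z_0^2\, \sigma_{+}(v_i)\,\Id_{\dd(v_i)}, \quad \sum_{i,\, a \in \ar_{i,j}} W'(a) W'(a)^T = z_0^2\, \sigma_{-}(w_j)\,\Id_{\dd(w_j)},$$
and an elementary scalar rescaling within $\GL(\dd) \cdot W$ (e.g.\ $W'' := W'/z_0$, realized by $A(v_i) = \Id$, $A(w_j) = z_0^{-1}\Id$) absorbs the $z_0^2$ factor to produce (\ref{minimal-eqn-1-lemma}) and (\ref{minimal-eqn-2-lemma}); the reverse direction runs this computation backward to recognize the matrix equations as the critical-point condition. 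The main obstacle I anticipate is verifying that these tools---King's orbit-closedness criterion and the Kempf--Ness theorem---transfer cleanly to the real setting. A robust workaround is to complexify and descend: a real semi-stable $W$ is $\sigma$-polystable over $\RR$ iff $W \otimes_{\RR} \CC$ is $\sigma$-polystable over $\CC$ (preservation of semisimplicity under base change), iff the complex $\GL(\dd,\CC)_{\sigma}$-orbit of $W \otimes \CC$ is closed (King's theorem), iff the real orbit $\GL(\dd)_{\sigma} \cdot W$ is closed; the critical-point characterization follows similarly by restricting the complex Kempf--Ness theorem to the real locus.
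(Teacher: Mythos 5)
Your proposal is correct and in substance follows the same route as the paper: the equivalence $(3) \Leftrightarrow (4)$ via the real Kempf--Ness/minimal-vector criterion, with the same gradient computation and the same scalar rescaling to absorb the $z_0^2$ factor (your rescaling $A(v_i)=\Id$, $A(w_j)=z_0^{-1}\Id$ is in fact written more carefully than the paper's); the implication $(2) \Rightarrow (1)$ via the associated graded of a Jordan--H\"older filtration; and, in your ``robust workaround,'' the complexify-and-descend argument (polystability preserved under base change to $\CC$, King's theorem over $\CC$, and Birkes/Borel--Harish-Chandra for the equivalence of real and complex orbit closedness), which is exactly how the paper handles $(1) \Rightarrow (3)$ and $(2) \Leftrightarrow (3)$. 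The caveat is that your primary, non-fallback sketches are the weak points: the ``direct-sum argument'' for closedness of $\GL(\dd)_{\sigma}\cdot W$ when $W$ is polystable is not elementary over $\RR$ (a direct sum of points with closed orbits need not have closed orbit for a general action; one genuinely needs the descent to $\CC$ or a Kempf--Ness argument), and in the shifting trick the excluded degeneration $\chi_{\sigma}(g_n) \to 0$ is ruled out by King's GIT characterization of $\chi_{\sigma}$-semistability rather than by Theorem \ref{cap-semi-stab-thm}, which concerns positivity of the capacity and is not the relevant tool there. Since you supply the complexification fallback, these gaps are repaired and the argument goes through as in the paper.
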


\begin{proof} Over the field of complex numbers, the equivalence of $(2)$ and $(3)$ follows from King's work in \cite{K}. We also know that for rational representations of reductive groups (defined over $\RR$), the orbit of a point (defined over $\RR$) is closed over $\RR$ if and only if it is closed over $\CC$. This is a general result due to Birkes \cite[Corollary 5.3]{Bir-71}, and Borel and Harish-Chandra \cite[Proposition 2.3]{BorHarCha-62}. Consequently, we get the equivalence $(2) \Longleftrightarrow (3)$.

Next, let us prove that $(1) \Longrightarrow (3)$. For a $\sigma$-semi-stable representation $W$ if $W$ is $\sigma$-polystable then so is $W_{\CC}$ where $W_{\CC}$ is the base change of $W$ to $\CC$  (see \cite[Proposition 2.4 and Remark 2.5]{HosSch2017}). The latter is equivalent to the orbit of $(W_{\CC},1)$ under $G:=\prod_{x \in Q_0}\GL(\dd(x), \CC)$ being closed in $X:=\prod_{a \in Q_1}\Mat_{\dd(ha)\times \dd(ta)}(\CC)\times \CC$ (see \cite{K}). As mentioned above, this is further equivalent to $(3)$. Thus we get that $(1)$ implies $(3)$.

Now, let us check that $(2) \Longrightarrow (1)$. Since $W$ is $\sigma$-semi-stable, there exists a $\sigma$-polystable representation lying in the closure of $\GL(\alpha)_{\sigma}W$. Indeed, such a polystable representation can be taken to be the associated graded representation corresponding to a Jordan-H{\"o}lder filtration of $W$ in the category $\rep(Q)^{ss}_{\sigma}$ (for more details, see also Theorem \ref{quiver-geom-data-thm}{(ii)}). So, assuming $(2)$, this $\sigma$-polystable representation belongs to $\GL(\dd)_{\sigma}W$; in particular, it is isomorphic to $W$, and hence $W$ is $\sigma$-polystable.

It remains to show that $(3) \Longleftrightarrow (4)$. For this, consider the above action of $\GL(\dd)$ on $\rep(Q,\dd)\times \mathbb{R}$ at the level of Lie algebras:
$$
A \cdot (V,z)=\left( (A(ha)\cdot V(a)-V(a)\cdot A(ta))_{a \in Q_1}, \left(\sum_{x \in Q_0}\sigma(x) \tr(A(x)) \right)\cdot z \right),
$$
for every $A=(A(x))_{x \in Q_0} \in \RR^{\dd \times \dd}:=\prod_{x \in Q_0} \RR^{\dd(x) \times \dd(x)}$, $\V=(\V(a))_{a \in Q_1} \in \rep(Q,\dd)$, and $z \in \RR$. We equip $\rep(Q,\dd)\times \mathbb{R}$ with the inner product $\langle -,- \rangle$ induced from the natural inner product $\langle Y,Z \rangle=\tr(Y \cdot Z^T)$ on each $\RR^{\dd(ha) \times \dd(ta)}$, $a \in Q_1$. In what follows, we say that $(\V,z) \in \rep(Q,\dd)\times \mathbb R$ is \emph{minimal} (or critical) if
$$
\langle A \cdot (\V,z), (\V,z) \rangle=0, \forall A \in \RR^{\dd \times \dd},
$$
which is equivalent to 
$$
\sum_{a \in Q_1, ta=x} \V(a)^T \V(a)-\sum_{a \in Q_1, ha=x} \V(a)\V(a)^T=\sigma(x)z^2 \Id_{\dd(x)}, \forall x \in Q_0.
$$

According to the Kempf-Ness theory of minimal vectors over $\mathbb R$ (see for example \cite[Theorem 3.28]{Wal-2017} or \cite[Theorem 1.1]{Boh-Laf-2017}), the $\GL(\dd)$-orbit of $(W,1)$ is closed in $\rep(Q,\dd)\times \mathbb{R}$ if and only if there exists $A \in \GL(\dd)$ such that $A\cdot (W,1)=(A \cdot W, \chi_{\sigma}(A))$ is minimal. It is immediate to see that for an $A \in \GL(\dd)$, $(A \cdot W, \chi_{\sigma}(A))$ is minimal if and only if $W'$ satisfies the matrix equations $(\ref{minimal-eqn-1-lemma})$ and $(\ref{minimal-eqn-2-lemma})$ where $W'=A'\cdot W \in \GL(\dd)W$ and $A'=(|\chi_{\sigma}(A)|^{-{1 \over 2}} \cdot A(x))_{x \in Q_0} \in \GL(\dd)$. 
\end{proof}

\begin{remark} \label{KN-arbitrary-rmk} We point out that Proposition \ref{main-prop-KN} holds for arbitrary quivers. Specifically, let $Q$ be a connected quiver (not necessarily bipartite), $\dd \in \ZZ^{Q_0}_{\geq 0}$ a dimension vector, and $\sigma \in \ZZ^{Q_0}$ an integral weight such that $\sigma \cdot \dd=0$. Let $W \in \rep(Q, \dd)$ be a $\sigma$-semi-stable representation of $Q$. Then the proof above simply shows that the following statements are equivalent:
\begin{enumerate}
\item $W$ is $\sigma$-polystable;

\item the $\GL(\dd)_{\sigma}$-orbit of $W$ is closed in $\rep(Q,\dd)$;

\item the $\GL(\dd)$-orbit of $(W,1)$ is closed in $\rep(Q,\dd)\times \RR$;

\item there exists a representation $W' \in \GL(\dd)W$ such that
$$
\sum_{a \in Q_1, ta=x} W'(a)^T\cdot W'(a)-\sum_{a \in Q_1, ha=x} W'(a) \cdot W'(a)^T=\sigma(x) \Id_{\dd(x)}, \forall x \in Q_0.
$$
\end{enumerate}
\end{remark}

We are now ready to prove Theorem \ref{quiver-geom-data-thm}.

\begin{proof}[Proof of Theorem \ref{quiver-geom-data-thm}] 
$(i)$ Let $\V \in \rep(Q,\dd)$ be a $\sigma$-semi-stable representation. Define $W \in \rep(Q,\dd)$ by $W(a):=\sqrt{\sigma_+(ta)\sigma_{-}(ha)}\cdot  \V(a)$ for every $a \in Q_1$. Furthermore, we can now see that $\G_{\sigma}(\V) \neq \emptyset$ if and only if there exists an $A \in \GL(\dd)$ such that $A\cdot W$ satisfies $(\ref{minimal-eqn-1-lemma})$ and $(\ref{minimal-eqn-2-lemma})$. Via Proposition \ref{main-prop-KN}, this is further equivalent to $W$, and hence $\V$, being $\sigma$-polystable.

\bigskip
\noindent
$(ii)$ Since $\V$ is $\sigma$-semi-stable, $V$ has a Jordan-H{\"o}lder filtration in $\rep(Q)^{ss}_{\sigma}$.  After choosing a basis for each $V(x)=\RR^{\dd(x)}$ compatible with this filtration, we can construct a $1$-psg $\lambda' \in X_{*}(\GL(\dd)_{\sigma})$ and $h \in \GL(\dd)$ such that $\lim_{t \to 0} \lambda'(t) (h \cdot V)$ exists and is isomorphic to the direct sum of the composition factors of the chosen Jordan-H{\"o}lder filtration; in particular, the limit is $\sigma$-polystable.

Setting $\lambda(t)=h^{-1} \lambda'(t) h, \forall t \in \RR$, we get that $\lambda \in X_{*}(\GL(\dd)_{\sigma})$ and $\widetilde{\V}:=\lim_{t \to 0} \lambda(t)\cdot V$ exists and is $\sigma$-polystable. It is clear that $\widetilde{\V}$ belongs to the closure of $\GL(\dd)_{\sigma} \V$.

Finally, we know that the capacity $\capa_{Q}(-, \sigma)$ is continuous (see \cite[Section 7]{GarGurOliWig-2017}) and $\GL(\dd)_{\sigma}$-invariant by Corollary \ref{capa-char-formula-coro}. Consequently, for any $\sigma$-polystable representation $\widetilde{\V} \in \overline{\GL(\dd)_{\sigma} \V}$, we get that
$$
\capa_{Q}(\V, \sigma)=\capa_{Q}(\widetilde{\V}, \sigma)=\chi_{\sigma}(A)^2 \cdot \capa_Q(A\cdot \widetilde{\V}, \sigma)=\chi_{\sigma}(A)^2,
$$
for any $A \in \G_{\sigma}(\widetilde{\V})$.
\end{proof}

We use Theorem \ref{quiver-geom-data-thm} in an essential way  to prove the following factorization of the capacity of quiver representations. 

\begin{theorem}\label{mult-formula-capa} Let $\V \in \rep(Q,\dd)$ be a representation such that 
$$
\V(a)=\left(
\begin{matrix}
\V_1(a) & X(a)\\
0&\V_2(a)
\end{matrix}\right),
\forall a \in Q_1,
$$
where $\V_i \in \rep(Q,\dd_i)$, $i \in \{1,2\}$, are representations of $Q$, and $X(a) \in \RR^{\dd_1(ha)\times \dd_2(ta)}, \forall a \in Q_1$. If $\sigma \cdot  \ddim \V_1=0$ then
$$
\capa_Q(\V,\sigma)=\capa_Q(\V_1,\sigma)\cdot \capa_Q(\V_2,\sigma).
$$
\end{theorem}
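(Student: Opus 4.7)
The strategy is to produce a $\sigma$-polystable degeneration of $\V$ that splits as a direct sum, then to compute both sides of the desired equality via the character formula in Theorem \ref{quiver-geom-data-thm}(ii). As a preliminary reduction, the hypothesis $\sigma \cdot \ddim \V_1 = 0$ combined with $\sigma \cdot \ddim \V = 0$ forces $\sigma \cdot \ddim \V_2 = 0$, and a short inspection of the short exact sequence $0 \to \V_1 \to \V \to \V_2 \to 0$ using $W \mapsto (W \cap \V_1,\, W/(W \cap \V_1))$ for subrepresentations $W \leq \V$ shows that $\V$ is $\sigma$-semi-stable if and only if both $\V_1$ and $\V_2$ are. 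Theorem \ref{cap-semi-stab-thm} then disposes of the non-semi-stable case (both sides of the equality vanish), so from here on I assume $\V$, $\V_1$, $\V_2$ are all $\sigma$-semi-stable.

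The key construction uses the one-parameter subgroup $\lambda \colon \RR^{\times} \to \GL(\dd)$ defined by $\lambda(t)(x) = \operatorname{diag}(t\, \Id_{\dd_1(x)},\, \Id_{\dd_2(x)})$ for all $x \in Q_0$. Since $\chi_\sigma(\lambda(t)) = t^{\sigma \cdot \ddim \V_1} = 1$, the one-parameter subgroup $\lambda$ takes values in $\GL(\dd)_\sigma$, and a direct block matrix computation gives
\[
\lambda(t)\cdot \V(a) \;=\; \begin{pmatrix} \V_1(a) & t\, X(a) \\ 0 & \V_2(a) \end{pmatrix}, \quad a \in Q_1,
\]
so $\lim_{t \to 0} \lambda(t) \cdot \V = \V_1 \oplus \V_2 \in \overline{\GL(\dd)_\sigma \V}$. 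Next, applying Theorem \ref{quiver-geom-data-thm}(ii) separately to $\V_1$ and $\V_2$ yields $\sigma$-polystable representations $\widetilde{\V_i} \in \overline{\GL(\dd_i)_\sigma \V_i}$ for $i = 1, 2$. Via the block-diagonal embedding $\GL(\dd_1)_\sigma \times \GL(\dd_2)_\sigma \hookrightarrow \GL(\dd)_\sigma$, the direct sum $\widetilde{\V} := \widetilde{\V_1} \oplus \widetilde{\V_2}$ is $\sigma$-polystable and lies in $\overline{\GL(\dd)_\sigma \V}$.

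To conclude, pick $A_i \in \G_\sigma(\widetilde{\V_i})$ (nonempty by Theorem \ref{quiver-geom-data-thm}(i)) and assemble $A \in \GL(\dd)$ via $A(x) = \operatorname{diag}(A_1(x),\, A_2(x))$. Each $A \cdot \widetilde{\V}(a)$ is block diagonal with diagonal blocks $A_i \cdot \widetilde{\V_i}(a)$, so the doubly stochastic equations $(\ref{geom-eq-1})$ and $(\ref{geom-eq-2})$ for $A \cdot \widetilde{\V}$ decouple arrow by arrow into the corresponding equations for the $A_i \cdot \widetilde{\V_i}$; hence $A \in \G_\sigma(\widetilde{\V})$. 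Multiplicativity of the determinant on block diagonal matrices gives $\chi_\sigma(A) = \chi_\sigma(A_1)\, \chi_\sigma(A_2)$, and Theorem \ref{quiver-geom-data-thm}(ii) applied at all three levels yields
\[
\capa_Q(\V, \sigma) \;=\; \chi_\sigma(A)^2 \;=\; \chi_\sigma(A_1)^2\, \chi_\sigma(A_2)^2 \;=\; \capa_Q(\V_1, \sigma)\cdot \capa_Q(\V_2, \sigma).
\]
The only real subtlety, and the reason the hypothesis $\sigma \cdot \ddim \V_1 = 0$ is indispensable, is keeping every degeneration inside $\GL(\dd)_\sigma$ rather than in the larger group $\GL(\dd)$: this hypothesis is exactly what makes $\lambda$ land in $\GL(\dd)_\sigma$ and what allows the componentwise polystable degenerations produced by Theorem \ref{quiver-geom-data-thm}(ii) to paste together within the kernel of $\chi_\sigma$.
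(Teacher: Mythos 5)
Your proof is correct and follows essentially the same route as the paper: degenerate $\V$ to $\V_1\oplus\V_2$ along the one-parameter subgroup $\lambda(t)$ inside $\GL(\dd)_\sigma$, pass to polystable degenerations of each summand, assemble a block-diagonal element of $\G_\sigma(\widetilde{\V_1}\oplus\widetilde{\V_2})$, and invoke the character formula of Theorem \ref{quiver-geom-data-thm}(ii). The only cosmetic difference is that the paper isolates the equality $\capa_Q(\V,\sigma)=\capa_Q(\V_1\oplus\V_2,\sigma)$ as an explicit step via continuity and $\GL(\dd)_\sigma$-invariance of the capacity, whereas you absorb it into the ``for any such $\widetilde{\V}$'' clause of that theorem.
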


\begin{proof} Let us consider the representation $\widetilde{\V} \in \rep(Q,\dd)$ given by
$$
\widetilde{\V}(a)=\left(
\begin{matrix}
\V_1(a) & 0\\
0&\V_2(a)
\end{matrix}\right),
\forall a \in Q_1.
$$
We claim that $\capa_Q(\V,\sigma)=\capa_Q(\widetilde{\V},\sigma)$. Indeed, for each $t \in \RR^*$, define
$$
\lambda(t)(x)=\left(
\begin{matrix}
t\Id_{\dd_1(x)} & 0\\
0&\Id_{\dd_2(x)}
\end{matrix}\right),
\forall x \in Q_0.
$$
Then, $(\lambda(t)\cdot \V)(a)=
\left(
\begin{matrix}
\V_1(a) & tX(a)\\
0&\V_2(a)
\end{matrix}\right)
, \forall a \in Q_1$, and so $\lim_{t \to 0} \lambda(t)\V=\widetilde{\V}$. We also have that $\chi_{\sigma}(\lambda(t))=t^{\sigma \cdot \dd_1}=1, \forall t \in \RR^*$, i.e. $\lambda \in X_{*}(\GL(\dd)_{\sigma})$. Using the fact that $\capa_Q(-,\sigma)$ is continuous and $\GL(\dd)_{\sigma}$-invariant (see \cite[Section 7]{GarGurOliWig-2017}  and Corollary \ref{capa-char-formula-coro}), we get that
$$
\capa_Q(\widetilde{\V},\sigma)=\lim_{t \to 0} \capa_Q(\lambda(t)\V, \sigma)=\capa_Q(\V,\sigma).
$$

In what follows, we show that
\begin{equation}\label{eqn-2-mult-formula}
\capa_Q(\widetilde{\V},\sigma)=\capa_Q(\V_1,\sigma)\cdot \capa_Q(V_2,\sigma),
\end{equation}
which will prove the desired factorization formula.

If $\capa_Q(\V,\sigma)=0$ then $\V$ is not $\sigma$-semi-stable by Theorem \ref{cap-semi-stab-thm}. In this case, we get that either $\V_1$ or $\V_2$ is not $\sigma$-semi-stable. This follows from the short exact sequence $0 \to \V_1 \to \V \to \V_2 \to 0$ of representations, and the fact that the category of $\sigma$-semi-stable representations of $Q$ is closed under extensions. Using Theorem \ref{cap-semi-stab-thm} again, this is equivalent to $\capa_Q(\V_1,\sigma)\cdot \capa_Q(\V_2,\sigma)=0$, proving $(\ref{eqn-2-mult-formula})$ when $\capa_Q(\V,\sigma)=0$.

Now, let us assume that $\capa_Q(\V,\sigma)>0$. In this case, we know from Theorem \ref{quiver-geom-data-thm} that there exists a $\sigma$-polystable representation $\V'_i \in \overline{\GL(\dd_i)_{\sigma}V_i}$ and a group element $A_i\in \GL(\dd_i)$ such that $(A_i\cdot \V'_i,\sigma)$ is a geometric quiver datum and 
$$
\capa_Q(V_i,\sigma)=\chi_{\sigma}(A_i)^2, \forall i \in \{1,2\}.
$$
In fact, we can choose each $\V'_i$ to be a degeneration of $\V_i$ along a $1$-psg of $\GL(\dd_i)_{\sigma}$. For $\widetilde{\V}':=\V'_1 \oplus \V'_2 \in \rep(Q,\dd)$ and $A:=A_1\oplus A_2 \in \GL(\dd)$, it is clear that $\widetilde{\V}' \in \overline{\GL(\dd)_{\sigma}\widetilde{V}}$ and $(A\cdot \widetilde{\V}',\sigma)$ is a geometric quiver datum. Consequently, we obtain from Theorem \ref{quiver-geom-data-thm}{(2)} that
$$
\capa_Q(\widetilde{V},\sigma)=\chi_{\sigma}(A)^2=\chi_{\sigma}(A_1)^2\cdot \chi_{\sigma}(A_2)^2=\capa_Q(V_1,\sigma)\cdot \capa_Q(V_2,\sigma).
$$
\end{proof}

\begin{remark} 
Let $\V \in \rep(Q,\dd)$ be a representation such that along every arrow, $\V$ is an upper triangular block matrix whose block entries are given by representations $\V_1, \ldots, \V_n$. Then Theorem \ref{mult-formula-capa} implies that
$$
\capa_Q(\V,\sigma)=\prod_{i=1}^n \capa_Q(\V_i,\sigma).
$$
\end{remark}

\subsection{Extremisable quiver data} For a representation $\V \in \rep(Q,\dd)$ and an $m$-tuple $Y=(Y_1,\ldots, Y_m)$ with $Y_j \in \s^{+}_{\dd(w_j)}$, $j \in [m]$, we set
$$
\capa_Q(\V, \sigma; Y):={ \prod_{i=1}^n \det \left( \sum_{j=1}^m \sigma_{-}(w_j)\left( \sum_{a \in \ar_{i,j}} \V(a)^T \cdot Y_j \cdot \V(a)  \right) \right)^{\sigma_{+}(v_i)}   \over \prod_{j=1}^m \det(Y_j)^{\sigma_{-}(w_j)}}
$$

\begin{definition} Let $\V \in \rep(Q,\dd)$ be such that $\capa_Q(\V, \sigma)>0$. We say that $(\V, \sigma)$ is \emph{gaussian-extremisable} if there exists an $m$-tuple $Y=(Y_j)_{j=1}^m$ with $Y_j \in \s^{+}_{\dd(w_j)}$, $j \in [m]$, such that 
$$
\capa_Q(\V,\sigma)=\capa_Q(\V,\sigma; Y)
$$

\noindent
We call any such tuple $Y$ a \emph{gaussian extremiser} for $(\V,\sigma)$.
\end{definition}

\begin{remark} Let $(\V,\sigma)$ be a gaussian-extremisable quiver datum with gaussian extremiser $Y=(Y_j)_{j=1}^m$. We claim that for any $A=(A(x))_{x \in Q_0} \in \GL(\dd)$, $(A \cdot \V, \sigma)$ is gaussian-extremisable with gaussian extremiser
\begin{equation}\label{eqn-g-extremizers}
\widetilde{Y}:=((A(w_j)^T)^{-1}\cdot Y_j \cdot A(w_j)^{-1} )_{j \in [m]}.
\end{equation}
Indeed, it is straightforward to see that 
$$\capa_Q(\V,\sigma)=\capa_Q(V,\sigma; Y)=(\chi_{\sigma}(A))^2\cdot \capa_Q(A\cdot \V,\sigma;\widetilde{Y}).$$ 
Using Corollary \ref{capa-char-formula-coro}, we then get that 
$$\capa_Q(A \cdot \V,\sigma)=(\chi_{\sigma}(A))^{-2}\cdot \capa_Q(\V, \sigma)=\capa_Q(A \cdot \V,\sigma;\widetilde{Y}),$$
and this proves our claim.
\end{remark}

Our next result gives necessary and sufficient conditions for $(\V, \sigma)$ to be gaussian-extremisable and explain how to construct all the gaussian extremisers from $\G_{\sigma}(\V)$.

\begin{theorem}\label{gaussian-extremisers-thm} Let $(\V, \sigma)$ be a quiver datum with $\V \in \rep(Q,\dd)$ and $\capa_Q(\V,\sigma)>0$. Then $\V$ is $\sigma$-polystable if and only if $(\V, \sigma)$ is gaussian-extremisable. If this is the case, the gaussian extremisers of $(\V, \sigma)$ are the $m$-tuples 
$$
(A(w_j)^T \cdot A(w_j))_{j \in [m]} \text{~with~}A \in \G_{\sigma}(\V). 
$$
\end{theorem}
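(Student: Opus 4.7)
The plan is to prove both implications, and then observe that the backward direction simultaneously produces the description of every gaussian extremiser. Throughout, write $M_i(Y) := \sum_{j=1}^m \sigma_{-}(w_j) \sum_{a \in \ar_{i,j}} \V(a)^T Y_j \V(a)$, so that $\capa_Q(\V,\sigma;Y) = \prod_i \det(M_i(Y))^{\sigma_{+}(v_i)}/\prod_j \det(Y_j)^{\sigma_{-}(w_j)}$.

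For the forward direction, suppose $\V$ is $\sigma$-polystable. By Theorem \ref{quiver-geom-data-thm}(i), we may pick $A \in \G_{\sigma}(\V)$, so $(A\cdot \V, \sigma)$ is a geometric datum. Since $(A\cdot\V,\sigma)$ satisfies (\ref{geom-eq-1}), we compute directly that
$$
\capa_Q(A\cdot \V, \sigma;\, (\Id_{\dd(w_j)})_{j\in[m]}) = 1 = \capa_Q(A\cdot\V, \sigma),
$$
where the last equality uses that the capacity of a geometric datum is $1$. Thus $(\Id_{\dd(w_j)})_j$ is a gaussian extremiser for $(A\cdot \V, \sigma)$. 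Applying the transformation formula (\ref{eqn-g-extremizers}) in reverse, the tuple $Y_j = A(w_j)^T A(w_j)$ is then a gaussian extremiser for $(\V, \sigma)$; in particular, $(\V,\sigma)$ is gaussian-extremisable and contains all extremisers of the prescribed form.

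For the backward direction, assume $Y = (Y_j)$ is a gaussian extremiser. First, each $M_i(Y)$ is positive definite: it is positive semi-definite as a positive combination of positive semi-definite terms, and if it were singular then $\capa_Q(\V,\sigma;Y)=0$, contradicting $\capa_Q(\V,\sigma)>0$. Since $Y$ lies in the open cone of positive-definite tuples and minimises the smooth function $\capa_Q(\V,\sigma;-)$ there, the gradient of $\log \capa_Q(\V,\sigma;-)$ vanishes at $Y$. Using $d\log\det(M) = \tr(M^{-1} dM)$ to differentiate each factor in $Y_k$, and collecting symmetric test directions, one obtains the key critical-point identity
$$
\sum_{i=1}^n \sigma_{+}(v_i) \sum_{a \in \ar_{i,k}} \V(a)\, M_i(Y)^{-1}\, \V(a)^T = Y_k^{-1}, \qquad \forall k \in [m].
$$

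Now define $A \in \GL(\dd)$ by $A(w_j) := Y_j^{1/2}$ and $A(v_i) := M_i(Y)^{1/2}$ (positive-definite square roots). Then $(A\cdot \V)(a) = Y_j^{1/2}\V(a) M_i(Y)^{-1/2}$ for $a\in\ar_{i,j}$, and a direct calculation shows that equation (\ref{geom-eq-1}) holds automatically (the definition of $A(v_i)$ is tailored for this), while (\ref{geom-eq-2}) is precisely a rewrite of the critical-point identity above after sandwiching by $Y_j^{1/2}$ on both sides. Thus $A \in \G_{\sigma}(\V)$, so $\V$ is $\sigma$-polystable by Theorem \ref{quiver-geom-data-thm}(i); and since $A(w_j)^T A(w_j) = Y_j$, this realises the given extremiser $Y$ in the claimed form, completing both the equivalence and the characterisation of all gaussian extremisers. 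The main obstacle is executing the gradient computation cleanly: the $Y_k$-dependence is distributed across every $M_i(Y)$, so index-matching must be carried out carefully, but once done the identity aligns exactly with (\ref{geom-eq-2}).
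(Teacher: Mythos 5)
Your proposal is correct and follows essentially the same route as the paper: the forward direction via $\G_{\sigma}(\V)\neq\emptyset$ and the transformation formula (\ref{eqn-g-extremizers}), and the backward direction via positive definiteness of the $M_i$, the first-order critical-point identity $\sum_i \sigma_{+}(v_i)\sum_{a\in\ar_{i,j}}\V(a)M_i^{-1}\V(a)^T=Y_j^{-1}$ obtained by differentiating the log of the capacity functional, and the construction $A(v_i)=M_i^{1/2}$, $A(w_j)=Y_j^{1/2}$ giving $A\in\G_{\sigma}(\V)$. The only difference is that the paper carries out the perturbation computation $Y_j\mapsto Y_j+\epsilon Q_j$ explicitly, which you correctly identify as the step requiring care.
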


\begin{proof} $(\Longrightarrow)$ Let $(\V, \sigma)$ be a quiver datum such that $\V$ is $\sigma$-polystable, and pick an arbitrary $A \in \G_{\sigma}(\V)$. Then $(A \cdot \V,\sigma)$ is geometric, and hence it is gaussian-extremisable with gaussian extremiser $(\Id_{\dd(w_j)})_{j=1}^m$. This observation combined with $(\ref{eqn-g-extremizers})$ shows that $(\V,\sigma)$ is gaussian-extremisable with gaussian extremiser $(A(w_j)^T \cdot A(w_j))_{j \in [m]}$.  

\smallskip
\noindent
$(\Longleftarrow)$ Let us assume now that $(\V, \sigma)$ is gaussian-extremisable and let $Y=(Y_j)_{j=1}^m$ be a gaussian extremiser for $(\V, \sigma)$. For each $i \in [n]$, let 
$$
M_i=\sum_{j \in [m]} \sigma_{-}(w_j) \left(\sum_{a \in \ar_{i,j}} \V(a)^{T} \cdot Y_j \cdot \V(a)  \right) \in \RR^{\dd(v_i)\times \dd(v_i)}.
$$

Since $\capa_Q(\V, \sigma)$ is assumed to be non-zero and 
$$ 
\capa_Q(\V,\sigma)={\prod_{i \in [n]} \det(M_i)^{\sigma_{+}(v_i)}  \over \prod_{j \in [m]} \det(Y_j)^{\sigma_{-}(w_j)}},$$
we conclude that each $M_i$ is a positive definite matrix. Define
$$
A(v_i)=M_i^{1 \over 2}, \forall i \in [n], \text{~and~}A(w_j)=Y_j^{1 \over 2}, \forall j \in [m].
$$

\noindent
\textbf{Claim:} If $A:=(A(v_i), A(w_j))_{i \in [n], j \in [m]}$ then $A \in \G_{\sigma}(\V)$.

\begin{proof}[Proof of \textbf{Claim}] 
 We begin by checking that $A \cdot \V$ satisfies equation  $(\ref{geom-eq-1})$. We have that 
\begin{align*}
&\sum_{j \in [m]} \sigma_{-}(w_j) \left( \sum_{a \in \ar_{i,j}} (A \cdot \V)^T(a) \cdot (A \cdot \V)(a) \right)=\\
=&\sum_{j \in [m]} \sigma_{-}(w_j) \left( \sum_{a \in \ar_{i,j}} A(ta)^{-T}\cdot \V(a)^T \cdot A(ha)^{T}\cdot A(ha) \cdot \V(a) \cdot  A(ta)^{-1} \right)\\
=&\sum_{j \in [m]} \sigma_{-}(w_j) \left( \sum_{a \in \ar_{i,j}} M_i^{-{1 \over 2}}\cdot \V(a)^T \cdot Y_j^{{1 \over 2}} \cdot Y_j^{{1 \over 2}} \cdot \V(a) \cdot  M_i^{-{1 \over 2}} \right)\\
=&M_i^{-{1 \over 2}}\cdot \left( \sum_{j \in [m]} \sigma_{-}(w_j) \left( \sum_{a \in \ar_{i,j}}  \V(a)^T \cdot Y_j \cdot \V(a) \right) \right) \cdot  M_i^{-{1 \over 2}} \\
=&M_i^{-{1 \over 2}}\cdot M_i \cdot  M_i^{-{1 \over 2}} =\Id_{\dd(v_i)}, \forall i \in [n],
\end{align*}
i.e. $A \cdot \V$ satisfies equation $(\ref{geom-eq-1})$. To show that $A \in \G_{\sigma}(\V)$, it remains to check that $A \cdot \V$ satisfies equation $(\ref{geom-eq-2})$, as well. For this, we first show that
\begin{equation}\label{g-extremisers-eqn}
\sum_{i \in [n]} \sigma_{+}(v_i) \left( \sum_{a \in \ar_{i,j}} \V(a)\cdot M_i^{-1}\cdot \V(a)^T \right)=Y_j^{-1}, \forall j \in [m].
\end{equation}

To prove $(\ref{g-extremisers-eqn})$, we adapt the proof strategy for the implication $(b) \Longrightarrow (c)$ in \cite[Proposition 3.6]{BenCarChrTao-2008} to our more general quiver set-up. After taking logarithms in the formula for the capacity in Lemma \ref{cap-compute-lemma}, we get that $Y$ is a minimiser for the quantity
\begin{equation} \label{eqn-proof-g-min}
\sum_{i \in [n]}\sigma_{+}(v_i)\log \left( \det \left( \sum_{j \in [m]} \sigma_{-}(w_j) \left( \sum_{a \in \ar_{i,j}} \V(a)^T\cdot Y_j \cdot \V(a)\right) \right) \right)-\sum_{j \in [m]} \sigma_{-}(w_j) \log (\det(Y_j))
\end{equation}
Now, let us fix $j \in [m]$ and let $Q_j \in \RR^{\dd(w_j) \times \dd(w_j)}$ be an arbitrary symmetric matrix. Replacing $Y_j$ by $Y_j+\epsilon Q_j$ in $(\ref{eqn-proof-g-min})$, we can see that $\epsilon=0$ is a minimiser for 
$$
A(\epsilon)-B(\epsilon),
$$
where
\begin{align*}
&A(\epsilon)=\sum_{i \in [n]}\sigma_{+}(v_i)\log \left( \det \left( M_i+\epsilon \sigma_{-}(w_j) \left( \sum_{a \in \ar_{i,j}} \V(a)^T\cdot  Q_j \cdot \V(a) \right) \right) \right)\\
=&\sum_{i \in [n]}\sigma_{+}(v_i)\log(\det M_i)+\sum_{i \in [n]}\sigma_{+}(v_i)\log \left( \det \left( \Id_{\dd(v_i)}+\epsilon \sigma_{-}(w_j)M_i^{-1} \left( \sum_{a \in \ar_{i,j}} \V(a)^T\cdot  Q_j \cdot \V(a) \right) \right) \right) 
\end{align*}
and
\begin{align*}
&B(\epsilon)=\sum_{j' \in [m], j' \neq j} \sigma_{-}(w_{j'}) \log (\det(Y_{j'}))+\sigma_{-}(w_j) \log (\det(Y_j+\epsilon Q_j))\\
=&\sum_{j' \in [m]} \sigma_{-}(w_{j'}) \log (\det(Y_{j'}))+\sigma_{-}(w_j) \log (\det(\Id_{\dd(w_j)}+\epsilon Y_j^{-1}\cdot Q_j))
\end{align*}
Since ${d \over d\epsilon} \log (\det(\Id+\epsilon Q))|_{\epsilon=0}=\tr(Q)$ for any symmetric matrix $Q$, we get that
\begin{align*}
0=&{d \over d\epsilon}(A(\epsilon)-B(\epsilon))|_{\epsilon=0}\\
=&\sum_{i \in [n]}\sigma_{+}(v_i)\tr\left(\sigma_{-}(w_j)M_i^{-1} \left( \sum_{a \in \ar_{i,j}} \V(a)^T\cdot  Q_j \cdot \V(a) \right)\right)-\sigma_{-}(w_j) \tr(Y_j^{-1}\cdot Q_j)\\
=&\sigma_{-}(w_j) \left(  \sum_{i \in [n]}\tr\left(\sigma_{+}(v_i)M_i^{-1} \left( \sum_{a \in \ar_{i,j}} \V(a)^T\cdot  Q_j \cdot \V(a) \right)\right)-\tr(Y_j^{-1}\cdot Q_j)   \right). 
\end{align*}
Rearranging the factors inside the trace, we obtain that  
\begin{equation} \label{tr-formula-g-min}
\tr \left( Q_j \cdot \left( \sum_{i \in [n]}\sigma_{+}(v_i) \left( \sum_{a \in \ar_{i,j}} \V(a)\cdot M_i^{-1}\cdot \V(a)^T \right)-Y_j^{-1} \right) \right)=0,
\end{equation}
holds for all symmetric matrices $Q_j$. Since $\sum_{i \in [n]}\sigma_{+}(v_i) \left( \sum_{a \in \ar_{i,j}} \V(a)\cdot M_i^{-1}\cdot \V(a)^T \right)-Y_j^{-1}$ is a symmetric matrix, we can see that $(\ref{tr-formula-g-min})$ yields the desired formula $(\ref{g-extremisers-eqn})$. 

Finally, for each $j \in [m]$, we get via $(\ref{g-extremisers-eqn})$ that 
\begin{align*}
&\sum_{i \in [n]} \sigma_{+}(v_i) \left(  \sum_{a \in \ar_{i,j}} (A\cdot \V)(a) \cdot (A\cdot V)^T(a) \right)=\\
=&A(w_j)\cdot \left(\sum_{i \in [n]} \sigma_{+}(v_i) \left(  \sum_{a \in \ar_{i,j}} \V(a)\cdot A(v_i)^{-1}\cdot A(v_i)^{-T} \cdot V^T(a) \right) \right) \cdot A(w_j)^T\\
=&A(w_j)\cdot \left(\sum_{i \in [n]} \sigma_{+}(v_i) \left(  \sum_{a \in \ar_{i,j}} \V(a)\cdot M_i^{-1} \cdot V^T(a) \right) \right) \cdot A(w_j)^T\\
=&A(w_j)\cdot Y_j^{-1} \cdot A(w_j)^T=\Id_{\dd(w_j)},
\end{align*}
i.e. $A \cdot \V$ satisfies equation $(\ref{geom-eq-2})$, as well. This finishes the proof of our claim.
\end{proof}

It now follows from Theorem \ref{quiver-geom-data-thm} and the claim above that $\V$ is indeed $\sigma$-polystable, and the gaussian extremiser $Y$ is of the form $(A(w_j)^T \cdot A(w_j))_{j \in [m]}$ with $A \in \G_{\sigma}(\V)$.
\end{proof}

Finally, we give necessary and sufficient conditions for a quiver datum to have unique gaussian extremisers. 

\begin{theorem} \label{uniqueness-g-extremals-thm} Let $(\V, \sigma)$ be a quiver datum with $\V \in \rep(Q,\dd)$ and $\capa_Q(\V,\sigma)>0$. If $\V$ is $\sigma$-stable and $\End_Q(V)=\RR$ then $(\V, \sigma)$ has unique gaussian extremisers (up to scaling). Conversely, if $(\V,\sigma)$ has unique gaussian extremisers (up to scaling) then $\V$ is $\sigma$-stable. 
\end{theorem}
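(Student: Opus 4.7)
The plan is to exploit the parametrization of gaussian extremisers by $\G_\sigma(\V)$ supplied by Theorem \ref{gaussian-extremisers-thm}, reducing uniqueness (up to scaling) to a question about the elements of $\G_\sigma(\V)$. Both directions hinge on the Kempf-Ness framework packaged in Proposition \ref{main-prop-KN}.

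For the forward direction, let $A, B \in \G_\sigma(\V)$, and, as in the proof of Theorem \ref{quiver-geom-data-thm}(i), rescale to $W(a) := \sqrt{\sigma_{+}(ta)\sigma_{-}(ha)}\,\V(a)$, so that the condition $A \in \G_\sigma(\V)$ is equivalent to $(A \cdot W, \chi_\sigma(A))$ being a minimal vector in the $\GL(\dd)$-orbit of $(W, 1) \in \rep(Q,\dd) \times \RR$ (and likewise for $B$). Since $\V$ is $\sigma$-stable, hence $\sigma$-polystable, this orbit is closed by Proposition \ref{main-prop-KN}, and the real Kempf-Ness theorem produces an element $K$ of the maximal compact $\mathbf{K} := \prod_{x \in Q_0} O(\dd(x))$ with $(B \cdot W, \chi_\sigma(B)) = K \cdot (A \cdot W, \chi_\sigma(A))$. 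Undoing the rescaling gives $B \cdot \V = K \cdot (A \cdot \V)$, so $A^{-1} K^{-1} B \in \Stab_{\GL(\dd)}(\V) = \operatorname{Aut}_Q(\V)$. The hypothesis $\End_Q(\V) = \RR$ forces $\operatorname{Aut}_Q(\V) = \RR^{\times} \cdot \Id$, whence $B = \lambda K A$ for some $\lambda \in \RR^{\times}$; the orthogonality $K(w_j)^T K(w_j) = \Id$ then yields
\[
B(w_j)^T B(w_j) = \lambda^2 \, A(w_j)^T A(w_j) \quad \forall\, j \in [m],
\]
so the two extremisers differ by the positive scalar $\lambda^2$.

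For the converse, assume $(\V, \sigma)$ has unique gaussian extremisers up to scaling. Existence combined with Theorem \ref{gaussian-extremisers-thm} makes $\V$ $\sigma$-polystable; write $\V = \V_1 \oplus \cdots \oplus \V_s$ as a direct sum of $\sigma$-stable summands, and suppose toward a contradiction that $s \geq 2$. Choose $A_i \in \G_\sigma(\V_i)$ using Theorem \ref{quiver-geom-data-thm}(i), so that $A := A_1 \oplus \cdots \oplus A_s \in \G_\sigma(\V)$. Because $\GL(\dd)$ acts by simultaneous conjugation, $(tA_2) \cdot \V_2 = A_2 \cdot \V_2$ for every $t > 0$, and hence $A_t := A_1 \oplus (tA_2) \oplus A_3 \oplus \cdots \oplus A_s$ again lies in $\G_\sigma(\V)$. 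Its extremiser $(A_t(w_j)^T A_t(w_j))_j$ agrees block-diagonally with $(A(w_j)^T A(w_j))_j$ except on the second block, where it is scaled by $t^2$. The relation $\sigma \cdot \ddim \V_i = 0$ together with $\sigma_{+} > 0$ on $Q_0^+$ rules out $\dd_i$ vanishing identically on $Q_0^{-}$, so for each of $i = 1, 2$ some $w_j$ carries a nonzero $i$-block. Demanding $(A_t(w_j)^T A_t(w_j))_j = \mu \cdot (A(w_j)^T A(w_j))_j$ then simultaneously forces $\mu = 1$ (from block $i = 1$) and $\mu t^2 = 1$ (from block $i = 2$), contradicting uniqueness as soon as $t \neq 1$.

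The principal technical step is the forward direction: one must identify $\operatorname{Aut}_Q(\V)$ with $\RR^\times \cdot \Id$ via the $\End_Q(\V) = \RR$ hypothesis, and transport the Kempf-Ness conclusion through the $\V \leftrightarrow W$ rescaling so that Proposition \ref{main-prop-KN} applies cleanly. The remaining steps are routine bookkeeping.
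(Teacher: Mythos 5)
Your argument is correct and follows essentially the same route as the paper: the forward direction invokes the real Kempf--Ness theorem to place any two elements of $\G_{\sigma}(\V)$ in the same $\mathbf{O}(\dd)\cdot\Stab_{\GL(\dd)}(\V)$-coset and then uses $\End_Q(\V)=\RR$ to reduce the stabilizer to scalars, while the converse perturbs a block-diagonal element of $\G_{\sigma}(\V)$ by scaling one summand (the paper uses $\lambda_1 A_1\oplus\lambda_2 A_2$ with $|\lambda_1|\neq|\lambda_2|$; your $A_1\oplus tA_2$ is the same idea, and your extra check that each $\dd_i$ is nonzero at some sink vertex is a welcome detail the paper leaves implicit).
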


\begin{proof} According to Kempf-Ness theory (see for example \cite[Theorem 1.1(i)]{Boh-Laf-2017}), if $\V$ is $\sigma$-polystable then for any two $A_1, A_2 \in \G_{\sigma}(\V)$, we have that
$$
A_2 \cdot \V \in \mathbf{O}(\dd)\cdot(A_1 \cdot \V),
$$
where $\mathbf{O}(\dd)$ denotes the subgroup of $\GL(\dd)$ consisting of all tuples of orthogonal matrices. In other words,
\begin{equation} \label{orbit-g-extremals-eqn}
A_2^{-1}\cdot h \cdot A_1 \in \Stab_{\GL(\dd)}(\V)=\End_Q(\V)^{\times} \text{~for some~} h \in \mathbf{O}(\dd).
\end{equation}

\smallskip
\noindent
To prove the first claim, let us assume that $\V$ is $\sigma$-stable and $\End_Q(\V)=\{(\lambda\Id_{\dd(x)})_{x \in Q_0} \mid \lambda \in \RR \}$. We know from Theorem \ref{gaussian-extremisers-thm} that $(\V,\sigma)$ is gaussian-extremisable, and let $A_1, A_2 \in \G_{\sigma}(\V)$. Then, by $(\ref{orbit-g-extremals-eqn})$, we can write
$$
A_2=\lambda (h \cdot A_1)
$$ 
for some $h \in \mathbf{O}(\dd)$ and $\lambda \in \RR^{\times}$. Consequently, we get that
$$
A_2(w_j)^T \cdot A_2(w_j)=\lambda^2 (A_1(w_j)^T \cdot A_1(w_j)), \forall j \in [m].
$$
It now follows from Theorem \ref{gaussian-extremisers-thm} that $(\V, \sigma)$ has unique gaussian extremisers, up to scaling.

For the second claim, let us assume $(V, \sigma)$ has unique gaussian-extremisers. By Theorem \ref{gaussian-extremisers-thm}, we know that $\V$ is $\sigma$-polystable. Let us assume for a contradiction that $\V=\V_1\oplus \V_2$ with $\V_1 \in \rep(Q,\dd_1)$, $\V_1 \in \rep(Q,\dd_2)$, two proper $\sigma$-polystable subrepresentations of $\V$. So we can choose $A_1 \in \G_{\sigma}(\V_1)$ and $A_2 \in \G_{\sigma}(\V_2)$ by Theorem \ref{gaussian-extremisers-thm}. Then, for any two scalars $\lambda_1, \lambda_2 \in \RR^{\times}$ with $|\lambda_1| \neq |\lambda_2|$, the gaussian extermisers for $(\V, \sigma)$ corresponding to
$$
\left(
\begin{matrix}
A_1&0\\
0&A_2
\end{matrix}
\right)
\text{~and~}
\left(
\begin{matrix}
\lambda_1 A_1&0\\
0&\lambda_2 A_2
\end{matrix}
\right)
$$
are not a scalar multiple of each other (contradiction). This finishes the proof. 
\end{proof}

\subsection{Proof of Theorem \ref{main-thm-2}} Parts $(1)$ and $(2)$ of Theorem \ref{main-thm-2} are proved in Theorem \ref{quiver-geom-data-thm}. Part $(3)$ is proved in Theorem \ref{mult-formula-capa}. Finally parts $(4)$ and $(5)$ are proved in Theorems \ref{gaussian-extremisers-thm} and \ref{uniqueness-g-extremals-thm}.

\section{Structural results for Brascamp-Lieb constants for quiver data}\label{gen-BL-const-summary-sec} 

In this section we explain how to rephrase our main results, Theorems \ref{cap-semi-stab-thm} and \ref{main-thm-2}, in terms of BL constants. Let $Q=(Q_0,Q_1,t,h)$ be a bipartite quiver with set of source vertices $Q_0^{+}=\{v_1, \ldots, v_n\}$, and set of sink vertices $Q_0^{-}=\{w_1,\ldots, w_m\}$. Denote by $\ar_{i,j}$ the set of all arrows from $v_i$ to $w_j$ for all $i \in [n]$ and $j \in [m]$.

Let $\dd$ be a dimension vector of $Q$ and $\tup=(p_1, \ldots, p_m)$ an $m$-tuple of positive rational numbers such that 
\begin{equation}\label{ortho-eqn}
\sum_{i=1}^n\dd(v_i)=\sum_{j=1}^m p_j\cdot \dd(w_j).
\end{equation}

A $\dd$-dimensional representation $\V$ of $Q$ is said to be \emph{$\tup$-semi-stable} if 
$$\sum_{i=1}^n \dim_{\RR} \V'(v_i) \leq \sum_{j=1}^m p_j \dim_{\RR}\V'(w_j),$$
for all subrepresentations $\V' \leq \V$. We say that $\V$ is \emph{$\tup$-stable} if the inequality above is strict for all proper subrepresentations $\V'$ of $\V$. A representation is said to be \emph{$\tup$-polystable} if it is a finite direct sum of $\tup$-stable representations. 

Let $\omega$ be the least common denominator of $p_1, \ldots, p_m$. Then the weight $\sigma_{\tup}$ of $Q$ induced by $\tup$ is defined by 
$$
\sigma_{\tup}(v_i)=\omega, \forall i \in [n], \text{~and~}\sigma_{\tup}(w_j)=-\omega \cdot p_j, \forall j \in [m].
$$ 
We also denote by $\chi_{\tup}$ the character of $\GL(\dd)$ induced by $\sigma_{\tup}$, i.e.
$$
\chi_{\tup}(A)=\prod_{i=1}^n \det(A(v_i))^{\omega}\cdot \prod_{j=1}^m \det(A(w_j))^{-wp_j}, \forall A \in \GL(\dd).
$$

Now, let $(\V, \tup)$ be a quiver datum with $\V \in \rep(Q,\dd)$. Then it is clear that
\begin{itemize}
\item $(\ref{ortho-eqn})$ is equivalent to $\sigma_{\tup}\cdot \dd=0$; and

\item $\V$ being $\tup$-semi-stable/stable/polystable is equivalent to \\ $\V$ being $\sigma_{\tup}$-semi-stable/stable/polystable.
\end{itemize}

Recall that the Brascamp-Lieb constant associated to the quiver datum $(\V,\tup)$ is 
\begin{equation}\label{bl-formula-defn}
\bl_Q(\V, \tup)=\sup \left \{ \left( { \prod_{j=1}^m \det(Y_j)^{p_j} \over \prod_{i=1}^n \det \left( \sum_{j=1}^m p_j \left(\sum_{a \in \ar_{ij}}\V(a)^T\cdot Y_j \cdot \V(a) \right) \right)} \right)^{1 \over 2}  \right \}, 
\end{equation}
where the supremum is taken over all positive definite matrices $Y_j \in \RR^{\dd(w_j)\times \dd(w_j)}$, $j \in [m]$. We say that $(\V, \tup)$ is \emph{feasible} if $\bl_Q(\V, \tup) < \infty$. A feasible quiver datum $(\V, \tup)$ is said to be \emph{gaussian-extremisable} if there exist positive definite matrices $Y_j \in \RR^{\dd(w_j)\times \dd(w_j)}$, $j \in [m]$, for which the supremum is attained in $(\ref{bl-formula-defn})$. If this is the case, we call such an $m$-tuple $(Y_1, \ldots,Y_m)$ a \emph{gaussian extremiser} for $(\V, \tup)$.

When working with BL constants, we ``scale'' the definition of a geometric quiver datum as follows: We say that $(\V,\tup)$ is a \emph{geometric BL quiver datum} if 
\begin{equation} \label{BL-geom-eq-3}
\sum_{j=1}^m p_j \sum_{a \in \ar_{i,j}} (\V(a))^T \cdot \V(a)=\Id_{\dd(v_i)}, \forall i \in [n],
\end{equation}
and
\begin{equation} \label{BL-geom-eq-4}
\sum_{i=1}^n  \sum_{a \in \ar_{i,j}} \V(a)\cdot (\V(a))^T=\Id_{\dd(w_j)}, \forall j \in [m].
\end{equation} 
(When $Q$ is the $m$-subspace quiver $\mathcal{Q}_m$, this is the definition of a geometric datum introduced in \cite[Section 2]{BenCarChrTao-2008}.)

Let us now explain the relationship between the algebraic varieties $\BL_{\tup}(\V)$ and $\G_{\sigma_{\tup}}(\V)$. For a representation $\V \in \rep(Q,\dd)$, consider the real algebraic variety
$$
\BL_{\tup}(\V)=\{A \in \GL(\dd) \mid (A\cdot \V,\tup) \text{~is a geometric BL quiver datum}\}.
$$
For $A \in \GL(\dd)$, define $\widetilde{A} \in \GL(\dd)$ by $\widetilde{A}(v_i)=A(v_i)$, $ \forall i \in [n]$, and $\widetilde{A}(w_j)=\omega^{-{1 \over 2}} \cdot A(w_j)$, $\forall j \in [m]$. Then it is straightforward to check that 
\begin{enumerate}[(i)]
\item $A \in \BL_{\tup}(\V) \Longleftrightarrow \widetilde{A} \in \G_{\sigma_{\tup}}(\V)$;\\

\item $\omega^N \cdot \chi_{\tup}(A)^2=\chi_{\sigma_{\tup}}(\widetilde{A})^2$;\\

\item for any $\tup$-polystable representation $\V \in \rep(Q,\dd)$, Theorem \ref{quiver-geom-data-thm}{(2)} and Remark \ref{BL-constant-rmk}{(2)} yield
$$
\bl_Q(\V, \tup)={1 \over {\sqrt[2\omega]{\omega^{-N}\cdot \capa_Q(\V,\sigma_{\tup})}}}={1 \over {\sqrt[2\omega]{\omega^{-N}\cdot \chi_{\sigma_{\tup}}(\widetilde{A})^2}}}={1 \over {\sqrt[2\omega]{\chi_{\tup}(A)^2}}}
$$
for any $A \in \BL_{\tup}(\V)$; and

\item an $m$-tuple $(Y_1, \ldots, Y_m)$ of positive definite matrices is a gaussian extremiser for $(\V, \tup)$ if and only if $(\omega^{-1} \cdot Y_1, \ldots, \omega^{-1} \cdot Y_m)$ is a gaussian extremiser for $(\V, \sigma_{\tup})$. 
\end{enumerate}

Consequently, applying Theorems \ref{cap-semi-stab-thm} and \ref{main-thm-2} to this set-up yields the following structural result on BL constants for arbitrary bipartite quivers.

\begin{theorem}\label{sumarry-BL-consts-thm} Keep the same notation as above.
\begin{enumerate}
\item (\textbf{Finitness of BL constants}) Let $(\V, \tup)$ be a quiver datum with $\V \in \rep(Q,\dd)$. Then $\bl_Q(\V, \tup) < \infty$ if and only if $\V$ is $\tup$-semi-stable. 

\bigskip

\item (\textbf{Kempf-Ness theorem for real quiver representations}) Let $(\V, \tup)$ be a feasible quiver datum with $\V \in \rep(Q,\dd)$ and consider the real algebraic variety
$$
\BL_{\tup}(\V):=\{A \in \GL(\dd) \mid (A\cdot \V, \tup)\text{~is a geometric BL quiver datum}\}.
$$ 
Then
$$
\BL_{\tup}(\V) \neq \emptyset \Longleftrightarrow \V \text{~is~}\tup-\text{polystable}.
$$

\bigskip

\item (\textbf{Character formula for BL constants}) Let $(\V, \tup)$ be a feasible quiver datum with $\V \in \rep(Q,\dd)$. Then there exists a $\tup$-polystable representation $\widetilde{\V}$ such that $\widetilde{\V} \in \overline{\GL(\dd)_{\tup}\V}$. Furthermore, for any such $\widetilde{\V}$, the following formula holds:
$$
\bl_Q(\V,\tup)=\bl_Q(\widetilde{\V},\mathbf{p})= |\chi_{\tup}(A)|^{-{1 \over \omega}}, \forall A \in \BL_{\tup}(\widetilde{\V}).
$$ 

\bigskip

\item (\textbf{Factorization of BL constants}) Let $\V \in \rep(Q,\dd)$ be a $\dd$-dimensional representation. Assume that
$$
\V(a)=\left(
\begin{matrix}
\V_1(a) & X(a)\\
0&\V_2(a)
\end{matrix}\right),
\forall a \in Q_1,
$$
where $\V_i \in \rep(Q,\dd_i)$, $i \in \{1,2\}$, are representations of $Q$, and $X(a) \in \RR^{\dd_1(ha)\times \dd_2(ta)}$ for every $a \in Q_1$. If $\tup$ and $\ddim_{\V_1}$ are orthogonal then
$$
\bl_Q(\V,\tup)=\bl_Q(\V_1,\tup)\cdot \bl_Q(\V_2,\tup).
$$

\bigskip

\item (\textbf{Gaussian extremisers: existence}) A feasible quiver datum $(\V, \tup)$ with $\V \in \rep(Q,\dd)$ is gaussian-extremisable if and only if $\V$ is $\tup$-polystable. If this is the case, the gaussian extremisers of $(\V, \tup)$ are the $m$-tuples of matrices
$$
(A(w_j)^T \cdot A(w_j))_{j \in [m]} \text{~with~}A \in \BL_{\mathbf{p}}(\V). 
$$

\item (\textbf{Gaussian extremisers: uniqueness}) If a feasible quiver datum $(\V, \tup)$ with $\V \in \rep(Q,\dd)$ has unique gaussian extremisers (up to scaling) then $\V$ is $\tup$-stable. Conversely, if $\V$ is $\tup$-stable and $\End_Q(V)=\RR$ then $(\V, \tup)$ has unique gaussian extremisers (up to scaling).
\end{enumerate}
\end{theorem}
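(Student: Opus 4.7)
The proof is essentially a translation exercise: the statement merely repackages Theorems \ref{cap-semi-stab-thm} and \ref{main-thm-2} in the language of BL constants, $\tup$-(semi/poly)stability, and geometric BL quiver data. All of the substantive work has already been done; what remains is to systematically apply the dictionary (i)--(iv) set up immediately before the theorem. The plan is to handle the six parts in order, each time invoking the corresponding statement about $\sigma_{\tup}$ and converting via the map $A \mapsto \widetilde{A}$, where $\widetilde{A}(v_i)=A(v_i)$ and $\widetilde{A}(w_j)=\omega^{-1/2} A(w_j)$.

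For part (1), I would simply combine Remark \ref{BL-constant-rmk}(c) with the equivalence ``$\V$ is $\tup$-semi-stable $\Longleftrightarrow$ $\V$ is $\sigma_{\tup}$-semi-stable'' and quote Theorem \ref{cap-semi-stab-thm}. For part (2), the identity $A \in \BL_{\tup}(\V) \Longleftrightarrow \widetilde{A} \in \G_{\sigma_{\tup}}(\V)$ from item (i) shows that $\BL_{\tup}(\V) \neq \emptyset$ iff $\G_{\sigma_{\tup}}(\V) \neq \emptyset$, so the result follows from Theorem \ref{main-thm-2}(1) together with the stability dictionary. Part (3) combines the existence of a $\sigma_{\tup}$-polystable degeneration $\widetilde{\V} \in \overline{\GL(\dd)_{\sigma_{\tup}}\V}$ from Theorem \ref{main-thm-2}(2) with item (iii): since $\GL(\dd)_{\tup} = \GL(\dd)_{\sigma_{\tup}}$, the same $\widetilde{\V}$ works, and the character formula $\bl_Q(\widetilde{\V},\tup) = |\chi_{\tup}(A)|^{-1/\omega}$ for $A \in \BL_{\tup}(\widetilde{\V})$ is exactly item (iii).

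Part (4) is immediate from Theorem \ref{main-thm-2}(3): the orthogonality $\tup \perp \ddim_{\V_1}$ is equivalent to $\sigma_{\tup} \cdot \ddim_{\V_1} = 0$, so the factorization of capacity transfers to a factorization of BL constants via Remark \ref{BL-constant-rmk}(b) (the factors $\omega^{-N}$ and the $2\omega$-th root distribute correctly because the dimension vectors add). For part (5), the polystability criterion follows from Theorem \ref{main-thm-2}(4) via item (iv), which identifies gaussian extremisers of $(\V,\tup)$ with gaussian extremisers of $(\V,\sigma_{\tup})$ up to the uniform rescaling by $\omega^{-1}$; the explicit description $(A(w_j)^T A(w_j))_{j}$ is then inherited from $\G_{\sigma_{\tup}}(\V)$ via $A \in \BL_{\tup}(\V) \Leftrightarrow \widetilde{A} \in \G_{\sigma_{\tup}}(\V)$, noting that $\widetilde{A}(w_j)^T \widetilde{A}(w_j) = \omega^{-1} A(w_j)^T A(w_j)$. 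Finally part (6) is the direct translation of Theorem \ref{main-thm-2}(5), using the same rescaling bijection on gaussian extremisers (scaling by $\omega^{-1}$ preserves uniqueness up to scalar multiples).

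The only mild subtlety, and the one point where I would be careful, is tracking the factor $\omega^{-N}$ and the exponent $2\omega$ throughout the character formula in part (3) and the factorization in part (4): one must check that when $\V$ decomposes, the exponents $N$ in the $\omega^{-N}$ scaling split additively in a way compatible with the multiplicativity of $\chi_{\tup}$. This is a routine bookkeeping check using $N = \sum_i \dd(v_i) = N_1 + N_2$ where $N_k = \sum_i \dd_k(v_i)$, and $\omega$ is the same common denominator for all three data. Beyond this verification, no new ideas are required — every analytic or invariant-theoretic input has been packaged into Theorems \ref{cap-semi-stab-thm} and \ref{main-thm-2}.
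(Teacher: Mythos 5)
Your proposal is correct and is exactly the paper's approach: the paper sets up the dictionary (i)--(iv) relating $\BL_{\tup}(\V)$ to $\G_{\sigma_{\tup}}(\V)$ and then deduces all six parts by direct translation from Theorems \ref{cap-semi-stab-thm} and \ref{main-thm-2}, which is what you do part by part. Your extra care with the $\omega^{-N}$ factor and the $2\omega$-th root in parts (3) and (4) is the right (and only) bookkeeping point to verify.
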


\begin{rmk} 
\begin{enumerate}
\item In a sequel to the current work, we plan to further study the capacity and BL-constants associated to quiver data by focusing on the constructive, algorithmic aspects of the real algebraic varieties $\G_{\sigma}(\V)$ and $\BL_{\mathbf{p}}(\V)$ introduced in this paper.  

\item The main results of this paper have already found applications to Edmonds' Problem in algebraic complexity (see \cite{ChiKli-Edmonds-2020}) and simultaneous robust subspace recovery in machine learning (see \cite{ChiKli-SRSR-2020}). Further applications of Theorem  \ref{quiver-geom-data-thm} and Proposition \ref{main-prop-KN} to Radial Isotropy and Paulsen's Problem for matrix frames will appear in a future paper on the subject.
\end{enumerate}
\end{rmk}

\providecommand{\bysame}{\leavevmode\hbox to3em{\hrulefill}\thinspace}
\providecommand{\MR}{\relax\ifhmode\unskip\space\fi MR }
\providecommand{\MRhref}[2]{%
  \href{http://www.ams.org/mathscinet-getitem?mr=#1}{#2}
}
\providecommand{\href}[2]{#2}

\end{document}